\documentclass[11pt]{amsart}
\usepackage{amsmath,amsfonts,amsthm,amssymb,amscd,color}
\usepackage{graphicx,epsf}
\usepackage[bookmarks]{hyperref}
\setlength{\textwidth}{6.2in}
\setlength{\oddsidemargin}{.25in}
\setlength{\evensidemargin}{.25in}
\setlength{\textheight}{8.5in}

\reversemarginpar

\newtheorem{theorem}{Theorem}[section]
\newtheorem{corollary}[theorem]{Corollary}

\newtheorem{prop}[theorem]{Proposition}
\theoremstyle{definition}

\theoremstyle{remark}
\newtheorem{rem}[theorem]{Remark}

\numberwithin{equation}{section}
\numberwithin{theorem}{section}

 \newcommand{\norm}[1]{\left\Vert#1\right\Vert}

\newcommand{\abs}[1]{\left\vert#1\right\vert}

\newcommand{\C}{{\mathbb{C}}}


\def\be{\begin{equation}}
\def\ee{\end{equation}}

\def \e{\epsilon}

\def\ep{\epsilon}

\def\O{\mathcal O}

\begin{document}
\title[Stability of multi-solitons in NLS]{\bf Stability of multi-solitons in the cubic NLS equation}

\author[A. Contreras, D.E. Pelinovsky]{Andres Contreras and Dmitry E. Pelinovsky}

\address{Department of Mathematics and Statistics, McMaster
University, Hamilton ON, Canada, L8S 4K1}

\date{\today}
\maketitle

\begin{abstract}
We address stability of multi-solitons in the cubic NLS (nonlinear Schr\"{o}dinger) equation
on the line. By using the dressing transformation and the inverse scattering transform methods,
we obtain the orbital stability of multi-solitons in the $L^2(\mathbb{R})$ space when
the initial data is in a weighted $L^2(\mathbb{R})$ space.
\end{abstract}


\section{Introduction}

We study stability properties of multi-solitons in
the cubic NLS (nonlinear Schr\"{o}dinger) equation
\be
\label{NLS}
i \partial_t q + \partial_x^2 q + 2 \abs{q}^2q = 0,
\ee
where $q(x,t) : \mathbb{R} \times \mathbb{R} \to \mathbb{C}$.
The initial value problem for the cubic NLS equation
(\ref{NLS}) associated with initial data $q |_{t = 0} = q_0$
is locally well-posed in $L^2(\mathbb{R})$
thanks to the result of Tsutsumi based on Stritcharz inequalities \cite{tsutsumi}. It
has a global solution in $L^2(\mathbb{R})$ thanks to the conservation
of the $L^2(\mathbb{R})$ norm in time $t$:
\begin{equation}
\label{conservation}
\| q(\cdot,t) \|_{L^2(\mathbb{R})} = \| q_0 \|_{L^2(\mathbb{R})}, \quad t \in \mathbb{R}.
\end{equation}

The cubic NLS equation (\ref{NLS}) can be studied by methods of the direct and
inverse scattering transforms known since the two classical works of Zakharov and Shabat \cite{ZS-NLS1,ZS-NLS2}.
There exists a vast literature on various applications of the inverse scattering
transform methods to the cubic NLS equation (\ref{NLS}), which we do not intend to overview
(see, e.g., the recent book \cite{AblowitzSegur,Ablowitz}).

Our particular emphasis is on the problem of nonlinear stability of multi-solitons, which are
known to exist in the explicit form \cite{ZS-NLS1,ZS-NLS2}.
Spectral and orbital stability of $n$-solitons in Sobolev space $H^n(\mathbb{R})$ was
proved by Kapitula \cite{Kap}, based on the classical work of Grillakis, Shatah, and Strauss
on stability of $1$-solitons in $H^1(\mathbb{R})$ \cite{GSS}.

Functional analytic methods were developed to study interactions of many widely separated solitons
in $H^1(\mathbb{R})$ for the NLS equation with cubic and higher-order nonlinearities, in particular,
by Perelman \cite{Perelman}, Rodnianski, Schlag, and Soffer \cite{Rod}, and Martel, Merle, and Tsai \cite{Merle}.
Recent progress along this direction includes the work of Holmer and Zworski \cite{HZ} on interaction of
a soliton with a $\delta$-distribution impurity and the work of Holmer and Lin \cite{Holmer} on
interactions of two solitons. Because the inverse scattering transform methods are not used
in this literature, the results are usually weaker than those obtained with the
inverse scattering transform methods.

New results on stability of $1$-solitons were obtained recently in the context of the cubic NLS equation (\ref{NLS}) by
combining functional analytic methods and the inverse scattering transform. Deift and Park \cite{DP}
computed long-time asymptotics for the NLS equation with a delta potential
to prove asymptotic stability of soliton-defect modes in a weighted $L^2(\mathbb{R})$ space
and to improve earlier results of Holmer and Zworski \cite{HZ}.
Mizumachi and Pelinovsky \cite{MP} proved orbital stability of $1$-solitons in $L^2(\mathbb{R})$
improving the standard results of Grillakis, Shatah, and Strauss \cite{GSS}.
Cuccagna and Pelinovsky \cite{CP} proved asymptotic stability of $1$-solitons in a weighted
$L^2(\mathbb{R})$ space by combining the inverse scattering and the steepest descent method,
which was earlier developed in a different context by Deift and Zhou \cite{DZ-book,DZ,Z}.

In this paper, we would like to extend the results of \cite{CP,DP,MP} to multi-solitons
of the cubic NLS equation (\ref{NLS}) by combining the inverse scattering transform methods
and the dressing transformation, which was developed by Zakharov and Shabat long ago \cite{ZS1,ZS2} (see, e.g.,
Chapter 3 in book \cite{ZMNP}).

We denote by $L^{2,s}(\mathbb{R})$ the weighted $L^2(\mathbb{R})$
space with the norm
\begin{equation}
\| u \|_{L^{2,s}(\mathbb{R})} := \| \langle x \rangle^s u \|_{L^2(\mathbb{R})}, \quad \langle x \rangle := \sqrt{1 + x^2}.
\end{equation}
The following theorem gives the main result of this article.

\begin{theorem}
Let $q^S$ be a $n$-soliton solution of the cubic NLS equation (\ref{NLS}) with real parameters
$\{ \xi_j, \eta_j, x_j, \theta_j \}_{j = 1}^n$ such that
pairs $(\xi_j,\eta_j)$ are all distinct. There exist positive constants $\epsilon_0$
and $C$ such that if $q_0 \in L^{2,s}(\mathbb{R})$ for any $s > \frac{1}{2}$ and if
\begin{equation}
\label{closeness}
\epsilon := \| q_0 - q^S(\cdot,0) \|_{L^{2,s}(\mathbb{R})} < \epsilon_0
\end{equation}
then there exist a solution $q$ of the cubic NLS equation (\ref{NLS}) with $q |_{t = 0} = q_0$
and an $n$-soliton solution $q^{S'}$ with real parameters
$\{ {\xi}'_j, {\eta}'_j, {x}'_j, {\theta}'_j \}_{j = 1}^n$
such that
\begin{equation}
\label{bounds-1}
\max_{1 \leq j \leq n} \left| ({\xi}'_j,{\eta}'_j,{x}'_j,{\theta}'_j) -
(\xi_j, \eta_j, x_j, \theta_j) \right| \leq C \epsilon
\end{equation}
and
\begin{equation}
\label{bounds-2}
\| q(\cdot,t) - q^{S'}(\cdot,t) \|_{L^2(\mathbb{R})} \leq C \epsilon, \quad t \in \mathbb{R}.
\end{equation}
\label{theorem-main}
\end{theorem}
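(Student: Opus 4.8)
The plan is to reduce the nonlinear statement to a uniform estimate on the dressing transformation by passing through the scattering data of the Zakharov--Shabat system. First I would apply the direct scattering transform to the initial datum $q_0$. The assumption $q_0\in L^{2,s}(\R)$ with $s>\frac12$ guarantees that the scattering map is well defined and Lipschitz continuous from a neighborhood of $q^S(\cdot,0)$ into the corresponding space of scattering data; since the data of the pure $n$-soliton $q^S$ consists of $n$ distinct eigenvalues $\lambda_j=\xi_j+i\eta_j$ with norming constants encoding $(x_j,\theta_j)$ and a vanishing reflection coefficient, the closeness (\ref{closeness}) yields a reflection coefficient $r$ with $\norm{r}\lesssim\epsilon$ in the relevant $H^s$-type norm, together with perturbed eigenvalues and norming constants that are $O(\epsilon)$-close to the unperturbed ones. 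The distinctness of the pairs $(\xi_j,\eta_j)$ is precisely what lets me apply a Rouch\'e / implicit-function argument to the analytic transmission coefficient $a(\lambda)$: for $\epsilon_0$ small the perturbed potential has exactly $n$ simple eigenvalues $\lambda'_j=\xi'_j+i\eta'_j$, no spurious eigenvalues appear, and reading off the perturbed norming constants produces $(x'_j,\theta'_j)$. Lipschitz continuity of this correspondence delivers (\ref{bounds-1}).

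Next I would define $q^{S'}$ as the reflectionless potential reconstructed from the discrete data $\{\lambda'_j,c'_j\}$ of $q_0$ alone, and use the fact that under the NLS flow the scattering data evolves explicitly: the eigenvalues are constant, while the norming constants and the reflection coefficient pick up the phases $e^{-4i(\lambda'_j)^2 t}$ and $e^{-4i\lambda^2 t}$. Consequently $q^{S'}(\cdot,t)$ remains an $n$-soliton for all $t$, and $\abs{r(\lambda,t)}=\abs{r(\lambda,0)}$ on the real axis, so $\norm{r(\cdot,t)}$ is conserved. I would also introduce the auxiliary pure-radiation solution $\tilde q$, namely the NLS solution whose scattering data consists of the reflection coefficient $r$ with no eigenvalues. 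The central structural observation is that the dressing (Darboux) transformation $D_t$ adjoining the $n$ eigenvalues with their time-evolved norming constants maps $\tilde q(\cdot,t)\mapsto q(\cdot,t)$ and $0\mapsto q^{S'}(\cdot,t)$, so that
\[
q(\cdot,t)-q^{S'}(\cdot,t)=D_t[\tilde q(\cdot,t)]-D_t[0].
\]

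To close the argument I need two estimates, both uniform in $t$. The first is the radiation bound $\norm{\tilde q(\cdot,t)}_{L^2(\R)}\lesssim\epsilon$; this follows from conservation of the $L^2$ norm along the NLS flow together with the trace formula expressing $\norm{\tilde q}_{L^2(\R)}^2$ in terms of $\frac{1}{\pi}\int\log(1+\abs{r}^2)\,d\lambda$, which is $\approx\norm{r}_{L^2(\R)}^2$ for small pure-radiation data and is conserved since $\abs{r}$ is. The second, and the genuine heart of the matter, is the Lipschitz bound $\norm{D_t[\tilde q(\cdot,t)]-D_t[0]}_{L^2(\R)}\lesssim\norm{\tilde q(\cdot,t)}_{L^2(\R)}$, holding uniformly in $t$. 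Combining the two yields (\ref{bounds-2}), the global solution $q$ itself being supplied by the well-posedness theory recalled in the introduction and identified with the inverse-scattering reconstruction.

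The hard part will be this uniform-in-time Lipschitz estimate for $D_t$. The dressing transformation is built from the Jost solutions of the background potential evaluated at the eigenvalues $\lambda'_j$, assembled into an $n\times n$ linear algebraic system whose solution reconstructs the adjoined solitons; I must show that this system is uniformly invertible and that the reconstruction depends Lipschitz-continuously on the background in $L^2(\R)$, with constants independent of $t$. The delicate point is that as $t\to\pm\infty$ the solitons separate, their centers moving with distinct velocities proportional to $\xi'_j$, so the off-diagonal couplings of the dressing matrix decay while each diagonal block remains controlled by the spatial localization of the Jost solutions; one must verify that this separation keeps the matrix well conditioned and does not allow the radiation contribution to be amplified. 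This is exactly where the weight $s>\frac12$ and the distinctness of $(\xi_j,\eta_j)$ enter decisively, as in the one-soliton analysis of \cite{MP}, but now with the additional bookkeeping required to track $n$ interacting spectral channels.
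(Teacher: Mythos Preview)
Your overall architecture matches the paper's: use direct scattering on $q_0$ to extract perturbed discrete data (giving (\ref{bounds-1})), strip the solitons to obtain a small soliton-free background $\tilde q$, propagate $\tilde q$ using $L^2$ conservation, and then apply the forward dressing to get (\ref{bounds-2}). The paper bounds $\|\tilde q_0\|_{L^2}$ by the two-sided Lipschitz estimate for the dressing map (its Corollary~\ref{PPP-cor}) rather than by the trace formula, but your route through the trace identity is an acceptable alternative.

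Two points in your sketch of the ``hard part'' deserve correction. First, the weight $s>\frac12$ is \emph{not} used in the uniform Lipschitz estimate for the dressing transformation; the paper's Proposition~\ref{PPP} is a pure $L^2$ statement, with the weight entering only in the scattering-map step (Lipschitz continuity of scattering data and exclusion of spurious eigenvalues). Second, your heuristic for the uniform invertibility of the dressing system---that the solitons separate with distinct velocities so the off-diagonal couplings decay---breaks down exactly in the breather case $\xi_1'=\xi_2'$, $\eta_1'\neq\eta_2'$, which is covered by the theorem. The paper does not argue via separation at all: it proves an algebraic lower bound on the Gramian determinant,
\[
|D^S|\;\gtrsim\;\cosh\!\bigl(2(\eta_1\varphi_1+\eta_2\varphi_2)\bigr)+\cosh\!\bigl(2(\eta_1\varphi_1-\eta_2\varphi_2)\bigr),
\]
valid uniformly in $t$ under the sole hypothesis $(\xi_1,\eta_1)\neq(\xi_2,\eta_2)$, and then shows that every error term in the perturbed quotients $F_{j,k}M_{j',k'}/D$ is dominated by this quantity. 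You should replace the dynamical separation picture by this static determinant estimate. Finally, the paper closes with an $H^3$-approximation argument to pass from classical solutions (where the dressing is rigorously defined via the Lax pair) to general $L^{2,s}$ data; you should include this step as well.
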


\begin{rem}
The orbital stability result (\ref{bounds-2}) is not expected in the $L^{2,s}(\mathbb{R})$
norm for $s > 0$ because, in a general situation, the variance of the solution $\| x q \|_{L^2(\mathbb{R})}$
grows linearly in time $t$, according to the cubic NLS equation (\ref{NLS}).
\end{rem}

For $n = 1$, the $L^2(\mathbb{R})$ orbital stability result of Theorem \ref{theorem-main} was proved by
Mizumachi and Pelinovsky \cite{MP} without the requirement that the initial data is close to the
$1$-soliton in the weighted $L^2(\mathbb{R})$ space (\ref{closeness}). Without this
requirement, the initial data can support more than one soliton in the long-time asymptotics,
but the additional solitons have small $L^2(\mathbb{R})$ norm and are included in the residual terms
of the $L^2(\mathbb{R})$ orbital stability result (\ref{bounds-2}).

At the same time, a stronger result on the asymptotic stability of $1$-solitons was proved
by Cuccagna and Pelinovsky \cite{CP} under the requirement (\ref{closeness}) with the decay
of the $L^{\infty}(\mathbb{R})$ norm of the residual term in time:
\begin{equation}
\| q(\cdot,t) - q^{S'}(\cdot,t) \|_{L^{\infty}(\mathbb{R})} \leq C \epsilon t^{-1/2}, \quad \mbox{\rm as} \quad t \to \infty.
\end{equation}

We believe that both the orbital stability if $q_0 \in L^2(\mathbb{R})$
and the asymptotic stability if $q_0 \in L^{2,s}(\mathbb{R})$ with $s > \frac{1}{2}$
hold for $n$ solitons but the proofs of these two refinements of Theorem \ref{theorem-main}
would require considerable lengthening of the present work at the possible expense of
obscuring the main argument. Note that a similar constraint on the initial data
to enable the inverse scattering transform methods is used by Gerard and Zhang \cite{GZ}
to prove the $L^{\infty}(\mathbb{R})$ orbital stability of black solitons in the cubic defocussing NLS equation.

For $n = 2$, the result of Theorem \ref{theorem-main} provides orbital stability of the class of $2$-solitons in the cubic
NLS equation (\ref{NLS}). Note that interactions of widely separated two solitons in this equation
was recently considered by Holmer and Lin \cite{Holmer} using an effective dynamical
equation that describes the solitons dynamics for large but finite time. Depending on the parameters
$(\xi_1,\xi_2,\eta_1,\eta_2)$, the two solitons collide and scatter with different velocities
but may also form a bound state (a time-oscillating space-localized breather). Although Theorem \ref{theorem-main}
provides orbital stability of the entire family of $2$-solitons, this result does not exclude
the phenomenon of instability of the $2$-soliton breather. Indeed, if the breather corresponds
to the constraint $\xi_1 = \xi_2$, whereas the initial condition yields $\xi_1' \neq \xi_2'$,
the time-oscillating breather is destroyed by the initial perturbation and transforms into
two solitons moving with different velocities.

The paper is organized as follows. Section 2 gives details of the dressing transformation,
which enables us to construct $n$-solitons from the trivial zero solution of the cubic NLS equation (\ref{NLS}).
Section 3 describes the time evolution for the dressing transformation. The exact $n$-solitons
are constructed in Section 4. Section 5 is devoted to analysis of the mapping between
the $L^2$-neighborhood of the zero solution and the $L^2$-neighborhood of the $n$-solitons:
the mapping is one-to-one but it is not onto, unless the constraints on soliton parameters
are imposed. Instead of adding constraints, we determine in Section 6 parameters of the $n$-soliton $q^{S'}$
based on the inverse scattering transform methods, which are enabled by adding the requirement (\ref{closeness})
on the initial data $q_0$. All together, these arguments will complete the proof of orbital
stability of $n$-solitons in $L^2(\mathbb{R})$.

\vspace{0.25cm}

{\bf Acknowledgements:} A.C. is supported by the postdoctoral fellowship at McMaster University.
D.P. is supported in part by the NSERC Discovery grant. The authors are indebted to S. Cuccagna 
for bringing this problem up and for critical remarks during the preparation of this manuscript.

\vspace{0.25cm}

\section{Dressing transformation}

We use the dressing method of Zakharov and Shabat \cite{ZS1,ZS2}
to map a neighborhood of the zero solution to a neighborhood of a multi-soliton
in the cubic NLS equation (\ref{NLS}). The dressing method relies on the existence
of the Lax operator pair
\begin{equation}
\label{eq:zs}
\left\{ \begin{array}{l} \partial_x \psi = - i z \sigma_3 \psi + Q(q) \psi, \\
\partial_t \psi = i (|q|^2 - 2 z^2) \sigma_3 \psi + 2 z Q(q) \psi - i Q(\partial_x q) \sigma_3 \psi,
\end{array} \right.
\end{equation}
where $\psi(x,t) : \mathbb{R} \times \mathbb{R} \to \C^2$ and
\begin{equation*}
Q(q) := \begin{pmatrix} 0  &  q \\ - \overline{q}  & 0 \end{pmatrix}, \quad
\sigma _3 :=    \begin{pmatrix} 1  & 0 \\ 0 & -1 \end{pmatrix}.
\end{equation*}
The compatibility condition $\psi_{xt} = \psi_{tx}$ for a classical
solution $\psi \in C^2(\mathbb{R} \times \mathbb{R}, \mathbb{C}^2)$
of the Lax system (\ref{eq:zs}) with constant spectral parameter $z$ is equivalent
to the requirement that $q(x,t)$ is a classical solution of the NLS equation (\ref{NLS}),
that is, $q$ is $C^1$ in $t$ and $C^2$ in $x$ for all $(x,t) \in \mathbb{R} \times \mathbb{R}$.

Let $\Phi(x,t,z)$ be a fundamental matrix solution of the system (\ref{eq:zs}) such that
$\Phi(0,t,z) = I$, where $I$ is a $2\times 2$ identity matrix.
In what follows, we will only consider the first part of the system
(\ref{eq:zs}) and will set $\phi(x,z) := \Phi(x,0,z)$. The time evolution
will be added in the next section, according to the standard analysis.

We define a fundamental matrix solution $\phi(x,z) : \mathbb{R} \times \mathbb{C} \to \mathbb{M}^{2 \times 2}$
from the system of differential equations:
\begin{equation}
\label{fund-matrix}
\left\{ \begin{array}{l} \partial_x \phi = U(x,z) \phi, \\
\phi |_{x = 0} = I, \end{array} \right. \quad U(x,z) := - i z \sigma_3 + Q(q).
\end{equation}
Since $U^+ = - U$, where $U^+ = \bar{U}^T$, the fundamental matrix
is inverted by the following elementary result.

\begin{prop}
Let $\phi$ be a fundamental matrix
solution of the system (\ref{fund-matrix}). Then, $\phi$ is invertible
and $\phi^{-1}(x,z) = \phi^+(x,z)$.
\end{prop}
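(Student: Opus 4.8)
The plan is to verify directly that $\phi^+\phi \equiv I$ on all of $\mathbb{R}$ and then to invoke that a square matrix possessing a left inverse is already two-sided invertible. First I would record that, since $U(\cdot,z)$ is continuous on $\mathbb{R}$ (the potential $q$ being at least continuous), the linear system (\ref{fund-matrix}) has a unique global solution $\phi(\cdot,z)$, so the matrix in question is well defined for every $x \in \mathbb{R}$.

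The heart of the argument is a conservation law obtained by differentiating the product $\phi^+\phi$ in $x$. Taking the Hermitian adjoint of $\partial_x\phi = U\phi$ gives $\partial_x\phi^+ = \phi^+ U^+$, and the skew-Hermitian property $U^+ = -U$ stated above the proposition turns this into $\partial_x\phi^+ = -\phi^+ U$. Consequently
\begin{equation*}
\partial_x(\phi^+\phi) = (\partial_x\phi^+)\phi + \phi^+(\partial_x\phi) = -\phi^+ U\phi + \phi^+ U\phi = 0,
\end{equation*}
so $\phi^+\phi$ is independent of $x$. Evaluating at $x=0$ and using the initial condition $\phi|_{x=0}=I$ yields $\phi^+\phi = I^+ I = I$ for all $x$.

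Finally, since $\phi$ is a $2\times 2$ matrix, the identity $\phi^+\phi = I$ forces $\det(\phi^+)\det(\phi)=1$, whence $\det\phi \neq 0$; thus $\phi$ is invertible, and multiplying $\phi^+\phi=I$ on the right by $\phi^{-1}$ gives $\phi^{-1}=\phi^+$, as claimed.

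As for difficulty, there is essentially no obstacle here: this is the standard ``unitary-type'' conservation law attached to a skew-Hermitian connection. The only two points requiring any care are that the adjoint identity $U^+=-U$ (given in the text, and reflecting the structure of $Q$) is exactly what makes $\partial_x(\phi^+\phi)$ vanish, and that a one-sided inverse suffices precisely because $\phi$ is square. One could alternatively establish invertibility first via Abel's formula, namely $\partial_x\det\phi = (\tr U)\det\phi = 0$ since $\tr U = 0$, giving $\det\phi \equiv \det I = 1$; but the computation above delivers both invertibility and the explicit form of the inverse in a single step, so I would present it that way.
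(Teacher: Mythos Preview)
Your proof is correct and follows essentially the same route as the paper: differentiate $\phi^+\phi$, use $U^+=-U$ to see it is constant, and evaluate at $x=0$. The only cosmetic difference is that the paper repeats the computation for $\phi\phi^+$ to get the two-sided inverse directly, whereas you pass through the determinant to upgrade the one-sided inverse; both arguments are standard and equivalent here.
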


\begin{proof}
We verify that
$$
\partial_x \left[  \phi^+(x,z) \phi(x,z) \right] = \phi^+(x,z) U^+(x,z) \phi(x,z) +
\phi^+(x,z) U(x,z) \phi(x,z) = 0,
$$
so that $\phi^+(x,z) \phi(x,z)$ is constant in $x$ and equals to $I$ because $\phi^+(0,z) \phi(0,z) = I$.
A similar computation holds for $\phi(x,z) \phi^+(x,z)$, hence $\phi^+(x,z)$ is the inverse for
$\phi(x,z)$.
\end{proof}

Let $\phi_0$ denote the fundamental matrix solution of the system (\ref{fund-matrix})
for the potential $q_0$. In what follows, $q_0$ is not related to the initial
data in Theorem \ref{theorem-main} but denotes another (simpler) solution of the
cubic NLS equation (\ref{NLS}). Let us define
the matrix function $\phi$  by the {\em dressing transformation} formula:
\begin{equation}
\label{dressing-trans}
\phi(x,z) := \chi(x,z) \phi_0(x,z), \quad \chi(x,z) = I + \sum_{k=1}^n \frac{i r_k(x) \otimes \bar{s}_k(x)}{z - z_k},
\end{equation}
where for each $k$, ${\rm Im}(z_k) > 0$, $r_k(x), s_k(x) : \mathbb{R} \to \mathbb{C}^2$
are to be defined, and $\otimes$ denotes an outer product of vectors in $\mathbb{C}^2$
(without complex conjugation). The factor $i$ is used in the sum for convenience.
The following result summarizes the dressing method of Zakharov and Shabat \cite{ZS1,ZS2}.
For convenience of readers, we give a precise proof of the dressing transformation.

\begin{prop}
Assume $q_0 \in C(\mathbb{R})$ and define the set $\{ s_k \}_{k=1}^n$ from
classical solutions of the Zakharov--Shabat (ZS) system
\begin{equation}
\label{solution-s}
\partial_x s_k = - i \bar{z}_k \sigma_3 s_k + Q(q_0) s_k, \quad 1 \leq k \leq n,
\end{equation}
such that the Gramian-type matrix with entries
\begin{equation}
\label{Gramian}
M_{k,j} := \frac{-i}{\bar{z}_k - z_j} \langle s_j, s_k \rangle, \quad 1 \leq k,j \leq n
\end{equation}
is invertible. Let the set $\{ r_k \}_{k=1}^n$ be defined from the set $\{ s_k \}_{k=1}^n$  by
unique solution of the linear system
\begin{equation}
\label{lin-system-1}
i s_k = \sum_{j=1}^n \frac{\langle s_j, s_k \rangle}{\bar{z}_k - z_j} r_j, \quad 1 \leq k \leq n,
\end{equation}
with the inverse
\begin{equation}
\label{lin-system-2}
i r_k = \sum_{j=1}^n \frac{\langle r_j, r_k \rangle}{\bar{z}_j - z_k} s_j, \quad 1 \leq k \leq n,
\end{equation}
where $\langle u, v \rangle := \bar{u}_1 v_1 + \bar{u}_2 v_2$ is the dot product between vectors in $\C^2$.
Then, the dressing transformation (\ref{dressing-trans}) is invertible with the inverse
\begin{equation}
\label{inverse-chi}
\phi^{-1}(x,z) = \phi_0^+(x,z) \chi^+(x,z), \quad
\chi^+(x,z) = I - \sum_{k=1}^n \frac{i s_k(x) \otimes \bar{r}_k(x)}{z - \bar{z}_k},
\end{equation}
and $\phi$ is a solution of the system $\partial_x \phi = U(x,z) \phi$
for the potential $q$, which is related to the potential $q_0$ by
the transformation formula
\begin{equation}
\label{transformation}
Q(q) = Q(q_0) + \sum_{k=1}^n r_k \otimes \bar{s}_k \sigma_3 - \sigma_3 s_k \otimes \bar{r}_k.
\end{equation}
In addition, the set $\{ r_k \}_{k = 1}^{n}$ satisfies the ZS system
\begin{equation}
\label{solution-r}
\partial_x r_k = - i z_k \sigma_3 r_k + Q(q) r_k, \quad 1 \leq k \leq n,
\end{equation}
associated with the same potential $q$.
\label{proposition-dressing}
\end{prop}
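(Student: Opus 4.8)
The plan is to reduce the whole statement to two facts: the purely algebraic invertibility of the rational matrix $\chi(x,z)$, and the intertwining (zero-curvature) relation $\partial_x\chi = U\chi-\chi U_0$, where $U_0(x,z):=-iz\sigma_3+Q(q_0)$ and $U(x,z):=-iz\sigma_3+Q(q)$. Both will be obtained by viewing the relevant combinations as rational functions of $z$ that decay at infinity and checking that all their residues at the simple poles $z=z_k$ and $z=\bar z_k$ (distinct because the $z_k$ lie in the open upper half-plane) vanish; Liouville's theorem then forces the functions to be constant or first-degree polynomial. Throughout I will use the outer-product identities $(a\otimes\bar b)(c\otimes\bar d)=\langle b,c\rangle\,a\otimes\bar d$, $(a\otimes\bar b)v=\langle b,v\rangle\,a$, and $(a\otimes\bar b)M=a\otimes\overline{M^{+}b}$ (with $M^{+}=\bar M^{T}$ for a constant matrix $M$), which turn each residue into a scalar linear relation among the $r_k,s_k$.

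First I would record the kernel identity $\chi(x,\bar z_k)s_k\equiv 0$: substituting $z=\bar z_k$ into (\ref{dressing-trans}) and using $(r_j\otimes\bar s_j)s_k=\langle s_j,s_k\rangle\,r_j$ turns the sum into $i\sum_j\frac{\langle s_j,s_k\rangle}{\bar z_k-z_j}r_j=i\cdot(is_k)=-s_k$ exactly by the defining system (\ref{lin-system-1}), cancelling the leading $s_k$. With this I would compute $\chi\chi^{+}$ for the candidate $\chi^{+}$ of (\ref{inverse-chi}); the product equals $I$ plus a rational function whose residues at $z=z_k$ and $z=\bar z_k$ are, respectively, the complex conjugate of (\ref{lin-system-1}) and (\ref{lin-system-1}) itself, hence vanish. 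Since the remainder decays at infinity it is identically zero, giving $\chi\chi^{+}=I$; as $\chi$ is $2\times2$ we also get $\chi^{+}\chi=I$, and setting the residue of this at $z=z_k$ to zero produces precisely the inverse relation (\ref{lin-system-2}). Invertibility of $\phi$ then follows from $\phi^{-1}=\phi_0^{-1}\chi^{-1}=\phi_0^{+}\chi^{+}$, using the previous Proposition (extended to complex $z$ by the same argument) for $\phi_0^{-1}=\phi_0^{+}$.

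Next I would prove $\partial_x\chi=U\chi-\chi U_0$ by showing that $V:=(\partial_x\chi+\chi U_0)\chi^{-1}$ is pole-free. At $z=\bar z_k$ the only singularity comes from $\chi^{-1}$, and the residue is proportional to $\big(\partial_x\chi(\bar z_k)+\chi(\bar z_k)U_0(\bar z_k)\big)s_k=\partial_x\big(\chi(\bar z_k)s_k\big)=0$ by the kernel identity and the $s_k$-equation (\ref{solution-s}). At $z=z_k$ the residue of $\partial_x\chi+\chi U_0$ simplifies to $i(\partial_x r_k)\otimes\bar s_k$ once one uses $(r_k\otimes\bar s_k)U_0(z_k)=-\,r_k\otimes\partial_x\bar s_k$ (which follows from $Q(q_0)^{+}=-Q(q_0)$ and (\ref{solution-s})); multiplying on the right by $\chi^{-1}(z_k)$ and invoking $(\chi^{-1}(z_k))^{+}s_k=\chi(\bar z_k)s_k=0$ kills it. Hence $V$ is a first-degree polynomial in $z$, whose $z^{1}$ and $z^{0}$ content comes entirely from $\chi U_0\chi^{-1}$ since $(\partial_x\chi)\chi^{-1}=O(z^{-1})$; matching these coefficients in the large-$z$ expansion gives $V=-iz\sigma_3+Q(q)$ with the correction term exactly as in (\ref{transformation}). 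That this correction is of the admissible form $Q(\cdot)$ (anti-Hermitian with zero diagonal) uses that $\sum_k r_k\otimes\bar s_k$ is Hermitian, which is the $O(z^{-1})$ content of $\chi\chi^{+}=I$. From $V=U$ one reads off $\partial_x\phi=U\phi$.

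Finally, to obtain (\ref{solution-r}) I would take the residue at $z=z_k$ of the now-established identity $\partial_x\chi=U\chi-\chi U_0$: the left side gives $i(\partial_x r_k)\otimes\bar s_k+i\,r_k\otimes\partial_x\bar s_k$, while the right side gives $i\,U(z_k)(r_k\otimes\bar s_k)-i(r_k\otimes\bar s_k)U_0(z_k)$; the last term again equals $i\,r_k\otimes\partial_x\bar s_k$, the two such terms cancel, and stripping the common nonzero factor $\bar s_k$ leaves $\partial_x r_k=U(x,z_k)r_k$, i.e.\ (\ref{solution-r}). I expect the genuine obstacle to be the intertwining step, specifically establishing that $V$ has no pole at $z=z_k$ and that the resulting constant matrix really carries the off-diagonal structure $Q(q)$: this is where the defining system (\ref{lin-system-1}), the kernel identity, and the Hermitian symmetry from $\chi\chi^{+}=I$ must all be combined, and where the sign and conjugation bookkeeping in the outer-product algebra is easiest to get wrong.
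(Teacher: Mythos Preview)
Your proposal is correct and follows the same residue-checking philosophy as the paper, but the order and mechanism differ in a useful way. The paper first derives the ZS equation (\ref{solution-r}) for $r_k$ by differentiating the linear system (\ref{lin-system-1}) in $x$, substituting (\ref{solution-s}), and manipulating via the auxiliary identity $\sum_m\langle r_m,s_k\rangle s_m=\sum_j\langle s_j,s_k\rangle r_j$; only afterward does it verify the residue conditions in $V=(\partial_x\chi+\chi U_0)\chi^{+}$, and at the pole $z=\bar z_k$ it must invoke the already-established equation (\ref{solution-r}) to close the computation. Your route reverses this: you use the kernel identity $\chi(\bar z_k)s_k=0$ together with (\ref{solution-s}) to kill the $z=\bar z_k$ residue directly, and the complementary identity $(\chi^{+}(z_k))^{+}s_k=\chi(\bar z_k)s_k=0$ to kill the $z=z_k$ residue, so the intertwining relation $\partial_x\chi=U\chi-\chi U_0$ is established without ever knowing how $r_k$ evolves; (\ref{solution-r}) then drops out as the residue of that identity at $z=z_k$. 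This is a genuine streamlining: it replaces the paper's explicit index-summation algebra (in particular the auxiliary transformation used twice in the paper's argument) by two invocations of the same kernel fact, and it makes transparent why the Hermitian symmetry $\sum_k r_k\otimes\bar s_k=\sum_k s_k\otimes\bar r_k$ (the $O(z^{-1})$ content of $\chi\chi^{+}=I$) is exactly what forces the $z^{0}$ term of $V$ to have the anti-Hermitian, off-diagonal form $Q(q)$. The paper's approach, by contrast, makes the dependence on invertibility of the Gramian $M$ more explicit at each step, which is helpful when one later wants quantitative control on the transformation.
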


\begin{proof}
First, we check that
$$
\chi^+(x,z) \chi(x,z) =  \chi(x,z) \chi^+(x,z) = I,
$$
which yields
$$
\phi^+(x,z) \phi(x,z) = \phi(x,z) \phi^+(x,z) = I.
$$
We use the partial fraction
$$
\frac{1}{(z-\bar{z}_j)(z-z_k)} = \frac{1}{\bar{z}_j - z_k} \left[ \frac{1}{z - \bar{z}_j} - \frac{1}{z - z_k} \right].
$$
Then $\chi(x,z) \chi^+(x,z) = I$ is equivalent to the system
$$
i r_k \otimes \bar{s}_k = \sum_{j = 1}^n \frac{(r_k \otimes \bar{s}_k) \; (s_j \otimes \bar{r}_j)}{\bar{z}_j - z_k},
$$
which yields the system (\ref{lin-system-1}) after projection to $r_k$ from the left. 

Now, since $\chi(x,z)$ is a square $n \times n$ matrix and 
$\chi(x,z) \chi^+(x,z) = I$, then $|\det(\chi(x,z))| = 1$ and therefore, $\chi(x,z)$ is invertible with 
$\chi^{-1}(x,z) = \chi^+(x,z)$. On the other hand, $\chi^+(x,z) \chi(x,z) = I$ is equivalent to the system
$$
i r_k \otimes \bar{s}_k = \sum_{j = 1}^n \frac{(s_j \otimes \bar{r}_j) \; (r_k \otimes \bar{s}_k)}{\bar{z}_j - z_k},
$$
which yields the system (\ref{lin-system-2}) after projection to $s_k$ from the right.
Therefore, the system (\ref{lin-system-2}) is inverse to the system (\ref{lin-system-1})). 
Note that all vectors in the sets $\{ s_k \}_{k=1}^n$ and $\{ r_k \}_{k=1}^n$ are nonzero because the matrix $M$
in (\ref{Gramian}) is invertible. 

Next, we confirm that the set $\{ r_k \}_{k = 1}^{n}$, which is determined by
the linear system (\ref{lin-system-1}), satisfies the ZS system
(\ref{solution-r}) with the potential $q$ if
the set $\{ s_k \}_{k=1}^n$ satisfies the ZS system
(\ref{solution-s}) with the potential $q_0$, where $q$ and $q_0$ are related by
the transformation formula (\ref{transformation}). Differentiating the linear system (\ref{lin-system-1})
in $x$ and substituting (\ref{solution-s}), we obtain
\begin{eqnarray*}
\bar{z}_k \sigma_3 s_k + i Q(q_0) s_k = - i \sum_{j =1}^n \langle \sigma_3 s_j, s_k \rangle
r_j + \sum_{j =1}^n \frac{\langle s_j, s_k \rangle}{\bar{z}_k - z_j} \partial_x r_j.
\end{eqnarray*}
Using the transformation formula (\ref{transformation}) and the inverse linear system (\ref{lin-system-2}),
we rewrite this equation as follows:
\begin{eqnarray*}
\sum_{j =1}^n \frac{\langle s_j, s_k \rangle}{\bar{z}_k - z_j} \partial_x r_j & = &
\bar{z}_k \sigma_3 s_k + \sum_{j =1}^n \frac{\langle s_j, s_k \rangle}{\bar{z}_k - z_j} Q(q) r_j + i
\sigma_3 \sum_{j =1}^n \langle r_j, s_k \rangle s_j \\
& = &
\bar{z}_k \sigma_3 s_k + \sum_{j =1}^n \frac{\langle s_j, s_k \rangle}{\bar{z}_k - z_j} Q(q) r_j + i
\sigma_3 \sum_{j =1}^n \langle s_j, s_k \rangle r_j,
\end{eqnarray*}
where the following transformation was used:
\begin{eqnarray}
\label{transform-help}
i \sum_{m =1}^n \langle r_m, s_k \rangle s_m & = & \sum_{m =1}^n \sum_{j =1}^n
\frac{\langle r_m, r_j \rangle \langle s_j, s_k \rangle}{\bar{z}_m - z_j} s_m \\
& = & \sum_{j =1}^n \langle s_j, s_k \rangle \sum_{m =1}^n
\frac{\langle r_m, r_j \rangle}{\bar{z}_m-z_j} s_m \nonumber \\
& = & i \sum_{j =1}^n \langle s_j, s_k \rangle r_j. \nonumber
\end{eqnarray}
Using the linear system (\ref{lin-system-1}) again, we obtain
\begin{eqnarray*}
\sum_{j =1}^n \frac{\langle s_j, s_k \rangle}{\bar{z}_k - z_j} \left[ \partial_x r_j + i z_j \sigma_3 r_j - Q(q) r_j \right] = 0,
\end{eqnarray*}
which yields the ZS system (\ref{solution-r}), because the matrix $M$ in (\ref{Gramian}) is invertible.

We shall now verify that $\phi(x,z)$ is a solution of the system $\partial_x \phi = U \phi$ from the condition
\begin{equation}
\label{potential-relation}
U(x,z) = \partial_x \phi(x,z) \phi^{-1}(x,z) = \left[ \partial_x \chi(x,z) + \chi(x,z) U_0(x,z)\right] \chi^+(x,z),
\end{equation}
where $U_0(x,z) = \partial_x \phi_0(x,z) \phi_0^{-1}(x,z)$. By using the partial fraction decompositions,
we shall first remove the residue terms at simple poles of equation (\ref{potential-relation}).

The residue terms at $\mathcal{O}\left(\frac{1}{z-z_k}\right)$ are removed from equation (\ref{potential-relation}) if
\begin{eqnarray*}
\left[ i \partial_x \left( r_k \otimes \bar{s}_k \right) + z_k r_k \otimes \bar{s}_k \sigma_3
+ i r_k \otimes \bar{s}_k Q(q_0) \right]
 \left[ I - i  \sum_{j = 1}^n \frac{s_j \otimes \bar{r}_j}{z_k - \bar{z}_j} \right] = 0.
\end{eqnarray*}
Because of the linear system (\ref{lin-system-1}), this equation simplifies to the form
\begin{eqnarray*}
r_k \otimes \left[ i \partial_x \bar{s}_k + z_k \bar{s}_k \sigma_3
+ i \bar{s}_k Q(q_0) \right]
 \left[ I - i  \sum_{j = 1}^n \frac{s_j \otimes \bar{r}_j}{z_k - \bar{z}_j} \right] = 0.
\end{eqnarray*}
Projection to $r_k$ from the left (assuming $r_k$ is nonzero) and Hermite conjugation
with the help of equation $Q^+(q_0) = - Q(q_0)$ yields the new equation
\begin{eqnarray*}
\left[ I + i  \sum_{j = 1}^n \frac{r_j \otimes \bar{s}_j}{\bar{z}_k - z_j} \right]
\left[ -i \partial_x s_k + \bar{z}_k \sigma_3 s_k + i Q(q_0) s_k \right] = 0,
\end{eqnarray*}
which is satisfied if $s_k$ is a nonzero solution of the ZS system (\ref{solution-s}). Note that the
operator on the left has a nontrivial kernel, hence the ZS system (\ref{solution-s}) is only a
particular solution of the constraint.

Next, the residue terms at $\mathcal{O}\left(\frac{1}{z-\bar{z}_j}\right)$ are removed
from equation (\ref{potential-relation}) if
\begin{eqnarray*}
\left[ \partial_x \left( \sum_{k=1}^n \frac{r_k \otimes \bar{s}_k}{\bar{z}_j-z_k} \right)
- \bar{z}_j \sigma_3 - i Q(q_0) - i \bar{z}_j \sum_{k=1}^n \frac{r_k \otimes \bar{s}_k}{\bar{z}_j-z_k}  \sigma_3
+\sum_{k=1}^n \frac{r_k \otimes \bar{s}_k}{\bar{z}_j-z_k}  Q(q_0) \right]
s_j \otimes \bar{r}_j = 0.
\end{eqnarray*}
Projection to $r_j$ from the right yields the new equation
\begin{eqnarray*}
\left[ \partial_x \left( \sum_{k=1}^n \frac{r_k \otimes \bar{s}_k}{\bar{z}_j-z_k} \right)
- \bar{z}_j \sigma_3 - i Q(q_0) - i \bar{z}_j \sum_{k=1}^n \frac{r_k \otimes \bar{s}_k}{\bar{z}_j-z_k}  \sigma_3
+\sum_{k=1}^n \frac{r_k \otimes \bar{s}_k}{\bar{z}_j-z_k}  Q(q_0) \right]
s_j = 0.
\end{eqnarray*}
If $s_k$ is a solution of the ZS system (\ref{solution-s}), then this equation reduces further to the form
\begin{eqnarray*}
\left[ \sum_{k=1}^n \frac{(\partial_x  r_k) \otimes \bar{s}_k}{\bar{z}_j-z_k}
- \bar{z}_j \sigma_3 - i Q(q_0) - i \sum_{k=1}^n r_k \otimes \bar{s}_k  \sigma_3 \right] s_j = 0,
\end{eqnarray*}
where the derivative in $x$ applies now to $r_k$ only.
This equation is rewritten with the help of the reconstruction formula (\ref{transformation}) in the form
\begin{eqnarray*}
\sum_{k=1}^n \frac{\langle s_k, s_j \rangle}{\bar{z}_j-z_k} \partial_x  r_k
- \bar{z}_j \sigma_3 s_j - i Q(q) s_j - i \sigma_3 \sum_{m =1}^n \langle r_m, s_j \rangle s_m = 0.
\end{eqnarray*}
Substituting transformation (\ref{transform-help}) to the previous equation
and using the linear system (\ref{lin-system-1}), we derive
\begin{eqnarray*}
\sum_{k=1}^n \frac{\langle s_k, s_j \rangle}{\bar{z}_j-z_k} \left[ \partial_x  r_k
- Q(q) r_k + i z_k \sigma_3 r_k \right]
- \bar{z}_j \sigma_3 s_j - i \bar{z}_j \sigma_3 \sum_{k =1}^n \frac{\langle s_k, s_j \rangle}{\bar{z}_j - z_k} r_k = 0.
\end{eqnarray*}
The last two terms cancel out thanks to the linear system (\ref{lin-system-1}). As a result,
the equation is satisfied if $r_k$ is a solution of the ZS system (\ref{solution-r}).

Now, since all the residue terms are removed from equation (\ref{potential-relation}),
this equation reduces a single equation, which is nothing but
the reconstruction formula (\ref{transformation}).
\end{proof}

\begin{rem}
Note that $\phi$ given by (\ref{dressing-trans}) is not the fundamental matrix solution
of the system (\ref{fund-matrix}) because
$\phi|_{x = 0} = I$ is not satisfied. However, if we define
$$
\phi(x,z) := \chi^+(0,z) \chi(x,z) \phi_0(x,z),
$$
then this $\phi$ is the fundamental
matrix solution of the system (\ref{fund-matrix}).
\end{rem}

For $1$-soliton solutions with $n = 1$, the result of Proposition \ref{proposition-dressing} can
be simplified as follows. Let $q_0 = 0$, $z_1 = \xi_1 + i \eta_1$ with $\eta_1 > 0$, and
let $s_1 = (\mathfrak{b}_1,\mathfrak{b}_2)$ be a solution of the ZS system (\ref{solution-s})
with $q_0 = 0$, or explicitly:
\begin{equation}
\label{system-b}
\left\{ \begin{array}{l} \partial_x \mathfrak{b}_1 = -(\eta_1 + i \xi_1) \mathfrak{b}_1, \\
\partial_x \mathfrak{b}_2 = (\eta_1 + i \xi_1) \mathfrak{b}_2. \end{array} \right.
\end{equation}
Then, $r_1 = (\mathfrak{a}_1,\mathfrak{a}_2)$ is found from the linear system (\ref{lin-system-1}) in the closed form:
\begin{equation}
\label{1-soliton-system}
\left[ \begin{array}{c} \mathfrak{a}_1 \\ \mathfrak{a}_2 \end{array} \right] =
\frac{2\eta_1}{|\mathfrak{b}_1|^2 + |\mathfrak{b}_2|^2} \left[ \begin{array}{c}\mathfrak{b}_1 \\ \mathfrak{b}_2 \end{array} \right].
\end{equation}
Note that $r_1$ is an eigenfunction for an isolated eigenvalue $z_1 = \xi_1 + i \eta_1$ of
the ZS system (\ref{solution-r}) associated with the $1$-soliton. The transformation formula
(\ref{transformation}) yields $1$-soliton:
\begin{equation}
\label{1-soliton-reconstruction}
q = - \mathfrak{a}_1 \bar{\mathfrak{b}}_2 - \bar{\mathfrak{a}}_2 \mathfrak{b}_1
= \frac{-4 \eta_1 \mathfrak{b}_1 \bar{\mathfrak{b}}_2}{|\mathfrak{b}_1|^2 + |\mathfrak{b}_2|^2}.
\end{equation}
Setting a general solution of the ZS system (\ref{system-b}) in the form
$$
\left\{ \begin{array}{c}
\mathfrak{b}_1 = e^{-(\eta_1 + i \xi_1)(x-x_0) + i \theta}, \\
\mathfrak{b}_2 = - e^{(\eta_1 + i \xi_1)(x-x_0) - i \theta}, \end{array} \right.
$$
where $x_0, \theta \in \mathbb{R}$ are arbitrary parameters, we obtain
from (\ref{1-soliton-system}) and (\ref{1-soliton-reconstruction})
the $1$-soliton with four arbitrary parameters:
\begin{equation}
\label{1-soliton}
q = 2 \eta_1 {\rm sech}(2 \eta_1 (x-x_0)) e^{2 i \theta - 2 i \xi_1 (x-x_0)}.
\end{equation}
Note that parameters $(x_0,\theta)$ can be set to zero by using the translational
and gauge transformations of the cubic NLS equation (\ref{NLS}),
parameter $\xi_1$ can be set to zero by using the Galileo transformation,
and parameter $\eta_1$ can be fixed at any positive number because of the scaling transformation.

\begin{rem}
Note that the result of the dressing method is different from
the result of the auto-Backlund transformation used in recent papers \cite{CP,DP,MP},
where the vector $(\mathfrak{b}_1,\mathfrak{b}_2)$ in the soliton reconstruction formula
(\ref{1-soliton-reconstruction}) was defined in terms of the solution of the spectral system (\ref{solution-s})
with $z_1$ instead of $\bar{z}_1$.
\end{rem}

\section{Time evolution of the dressing transformation}

Before looking at the time evolution of the dressing transformations, let us give an explicit
representation of the dressing transformation with the help of matrix algebra.

Let the set $\{ s_k \}_{k=1}^n$ be defined by the solutions of
the Zakharov--Shabat system (\ref{solution-s}) such that the Gramian-type matrix
$M$ in (\ref{Gramian}) is invertible. Vectors $\{ r_k \}_{k = 1}^n$
are uniquely defined by the linear system (\ref{lin-system-1}).

Let $D := \det(M)$ and $D_{k,j}$ be the co-factor of the element $M_{k,j}$.
Because $M$ is invertible, we have $D \neq 0$. A unique solution of the
linear system (\ref{lin-system-1}) can be expressed in the explicit form
\begin{equation}\label{ma3}
r_k = \sum_{j=1}^n \frac{D_{j,k}}{D} s_j.
\end{equation}
Note that matrix $M$ is Hermitian and hence, $D_{j,k} = \bar{D}_{k,j}$.
Under this constraint, the transformation formula (\ref{transformation}) yields
$$
Q(q) - Q(q_0) = \sum_{k=1}^n r_k \otimes \bar{s}_k \sigma_3 - \sigma_3 s_k \otimes \bar{r}_k
= \sum_{k=1}^n \left( \begin{array}{cc} r_{k,1} \bar{s}_{k,1} - \bar{r}_{k,1} s_{k,1} &
-r_{k,1} \bar{s}_{k,2} - \bar{r}_{k,2} s_{k,1} \\ r_{k,2} \bar{s}_{k,1} + \bar{r}_{k,1} s_{k,2} &
-r_{k,2} \bar{s}_{k,2} + \bar{r}_{k,2} s_{k,2} \end{array} \right),
$$
where the diagonal entries are zeros and the off-diagonal entries yield the transformation formula
\begin{equation}
\label{transformation-q}
q - q_0 = - \sum_{k=1}^n \sum_{j=1}^n \left( \frac{D_{j,k}}{D} s_{j,1}
\bar{s}_{k,2} + \frac{\bar{D}_{j,k}}{D} s_{k,1}
\bar{s}_{j,2} \right) = -\frac{2}{D} \sum_{k=1}^n \sum_{j=1}^n  D_{j,k} s_{j,1} \bar{s}_{k,2}.
\end{equation}

The time-dependent part of the Lax operator (\ref{eq:zs}) can be included
into consideration thanks to the compatibility between the two linear equations
and the  independence of the spectral parameter $z$ from variables $(x,t)$,
under the condition that $q$ is a classical solution of the cubic NLS equation (\ref{NLS}).
Therefore, we consider the time-dependent system
\begin{equation}
\label{fund-matrix-time}
\partial_t \phi = V(x,t,z) \phi, \quad V(x,t,z) := i (|q|^2 - 2 z^2) \sigma_3  + 2 z Q(q) - i Q(\partial_x q) \sigma_3.
\end{equation}
We assume that $q_0$ is a classical solution of the cubic NLS equation (\ref{NLS})
and $\phi_0$ is a matrix solution of the system (\ref{fund-matrix-time}) for the potential $q_0$.
Then, we define the matrix function $\phi$ by the same dressing transformation formula (\ref{dressing-trans}).
The following result gives a time-dependent analogue of Proposition \ref{proposition-dressing}.

\begin{prop}
In addition to conditions of Proposition \ref{proposition-dressing},
assume that $q_0$ is a classical solution of the cubic NLS equation (\ref{NLS})
and the set $\{ s_k \}_{k = 1}^n$ yields a classical solution of
the time-evolution part of the Lax operator pair
\begin{equation}
\label{solution-s-time}
\partial_t s_k = i (|q_0|^2 - 2 \bar{z}_k^2) \sigma_3 s_k + 2 \bar{z}_k Q(q_0) s_k
- i Q(\partial_x q_0) \sigma_3 s_k, \quad 1 \leq k \leq n.
\end{equation}
Let $\phi$ be defined by the dressing transformation (\ref{dressing-trans}),
the set $\{ r_k \}_{k = 1}^n$ be defined by the linear system (\ref{lin-system-1}),
and $q$ be defined by the transformation formula (\ref{transformation}). Then,
$\phi$ is a solution of the system (\ref{fund-matrix-time}) for the potential $q$,
$\{ r_k \}_{k = 1}^n$ is a solution of the time-evolution part of the Lax operator pair
\begin{equation}
\label{solution-r-time}
\partial_t r_k = i (|q|^2 - 2 z_k^2) \sigma_3 r_k + 2 z_k Q(q) r_k
- i Q(\partial_x q) \sigma_3 r_k, \quad 1 \leq k \leq n,
\end{equation}
and $q$ is a classical solution of the cubic NLS equation (\ref{NLS}). In addition
to the transformation formula (\ref{transformation}), $q$ and $q_0$ are related by
\begin{equation}
\label{transformation-modulus}
|q|^2 = |q_0|^2 + \partial_x^2 \log(D),
\end{equation}
where $D := \det(M)$ and $M$ is given by the Gramian-type matrix (\ref{Gramian}).
\label{proposition-time}
\end{prop}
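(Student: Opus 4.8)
The plan is to follow the proof of Proposition~\ref{proposition-dressing} almost verbatim, replacing the linear-in-$z$ operator $U$ by the quadratic-in-$z$ operator $V$. Since $\chi$ and $\chi^+$ have exactly the same simple poles in $z$ as before, the only genuinely new ingredient is the matching of the polynomial-in-$z$ part, which is now of degree two. I would establish the four assertions in the order: the time evolution (\ref{solution-r-time}) of $\{r_k\}$, then the equation $\partial_t\phi=V(q)\phi$, then the NLS equation for $q$ by a compatibility argument, and finally the modulus relation (\ref{transformation-modulus}) by a direct trace computation.

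First I would show that $\{r_k\}$ solves (\ref{solution-r-time}). Differentiating the linear system (\ref{lin-system-1}) in $t$, substituting (\ref{solution-s-time}) for $\partial_t s_k$, and eliminating the resulting inhomogeneous terms by means of the transformation formula (\ref{transformation}), the inverse system (\ref{lin-system-2}), and the auxiliary identity (\ref{transform-help}), the invertibility of $M$ forces (\ref{solution-r-time}); this is the exact analogue of the derivation of (\ref{solution-r}) in the spatial case, only with more terms to carry. Next, writing $\phi=\chi\phi_0$ and $V_0:=V(q_0)=\partial_t\phi_0\,\phi_0^{-1}$, the claim $\partial_t\phi=V(q)\phi$ is equivalent to the matrix identity $\partial_t\chi=V(q)\chi-\chi V_0$, which I would verify as a meromorphic function of $z$. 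The poles at $z=z_k$ are removed using the time ZS system (\ref{solution-s-time}) for $s_k$, and the poles at $z=\bar z_j$ are removed using (\ref{solution-r-time}) for $r_k$, exactly as the residues of $U$ were cancelled in Proposition~\ref{proposition-dressing}.

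The main obstacle is the matching of the entire (polynomial) part of $V(q)\chi-\chi V_0$, which must vanish at $z=\infty$ because $\partial_t\chi$ does. Expanding $\chi=I+R/z+R_2/z^2+O(z^{-3})$ with $R=\sum_k i\,r_k\otimes\bar s_k$ and $R_2=\sum_k i\,z_k\,r_k\otimes\bar s_k$, the $z^2$ coefficient cancels because $[\sigma_3,I]=0$; the $z^1$ coefficient equals $-2i[\sigma_3,R]+2(Q(q)-Q(q_0))$, whose vanishing reproduces the reconstruction formula (\ref{transformation}); and the $z^0$ coefficient, built from $R$, $R_2$ and $W(q)-W(q_0)$ where $W(q):=i|q|^2\sigma_3-iQ(\partial_x q)\sigma_3$, must also vanish. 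Its diagonal part is exactly the modulus relation $|q|^2-|q_0|^2=\partial_x^2\log D$, while its off-diagonal part is the $x$-derivative of (\ref{transformation}); keeping track of these cross-terms is the delicate point.

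Finally, I would prove (\ref{transformation-modulus}) independently by a trace computation that uses only the spatial systems. From $\partial_x\log D=\mathrm{tr}(M^{-1}\partial_x M)$ and the ZS system (\ref{solution-s}) one finds $\partial_x M_{k,j}=-\langle s_j,\sigma_3 s_k\rangle$, so that inserting (\ref{ma3}) together with the Hermiticity $D_{j,k}=\bar D_{k,j}$ gives $\partial_x\log D=-\sum_k\langle r_k,\sigma_3 s_k\rangle$. Differentiating once more, substituting both (\ref{solution-s}) and (\ref{solution-r}), and collapsing the outcome with the explicit transformation formula (\ref{transformation-q}) yields $\partial_x^2\log D=|q|^2-|q_0|^2$, consistent with the $z^0$ matching above. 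With $\partial_t\phi=V(q)\phi$ in hand, the NLS equation for $q$ then costs nothing: $\phi$ solves both $\partial_x\phi=U(q)\phi$ (Proposition~\ref{proposition-dressing}) and $\partial_t\phi=V(q)\phi$ with $\phi$ invertible, so equating $\partial_x\partial_t\phi=\partial_t\partial_x\phi$ forces $(\partial_t U-\partial_x V+[U,V])\phi=0$, and invertibility of $\phi$ gives the zero-curvature condition, which is equivalent to (\ref{NLS}) as noted after (\ref{eq:zs}).
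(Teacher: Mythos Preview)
Your overall strategy is close to the paper's, but there is one genuine ordering gap. You assert that (\ref{solution-r-time}) follows from differentiating (\ref{lin-system-1}) in $t$ and eliminating inhomogeneous terms using only (\ref{transformation}), (\ref{lin-system-2}), and (\ref{transform-help}), ``the exact analogue of the derivation of (\ref{solution-r}) in the spatial case, only with more terms to carry.'' This is not so: the temporal operator $V$ contains the diagonal term $i|q|^2\sigma_3$ and the derivative term $-iQ(\partial_x q)\sigma_3$, which have no counterpart in the spatial reduction. After using the tools you list (plus the $x$-derivative of (\ref{transformation}) and the spatial ZS systems), the paper finds a residual
\[
G_k=\bigl(|q|^2-|q_0|^2+\partial_x R\bigr)\sigma_3 s_k,\qquad R:=\sum_j\bigl(r_j\otimes\bar s_j\,\sigma_3+\sigma_3\,s_j\otimes\bar r_j\bigr),
\]
and then computes $R$ directly from (\ref{ma3}): it is diagonal with $\partial_x R=-(\partial_x^2\log D)\,I$. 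Hence $G_k=0$ is \emph{equivalent} to (\ref{transformation-modulus}), so (\ref{solution-r-time}) cannot be closed before the modulus relation is in hand. In the paper, the two are established simultaneously; in your plan, the modulus relation comes last, which leaves your first step unsupported.

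The fix is simply to reorder: your independent trace proof of (\ref{transformation-modulus}) is correct and should come first. Your identity $\partial_x\log D=-\sum_k\langle r_k,\sigma_3 s_k\rangle$ is exactly the paper's computation of $\tfrac12(R_{11}+R_{22})$, and differentiating once more with (\ref{solution-s}) and (\ref{solution-r}) gives $\partial_x^2\log D=-\sum_k\langle r_k,\sigma_3(Q(q)+Q(q_0))s_k\rangle$; inserting (\ref{ma3}) one finds $\sum_k r_{k,1}\bar s_{k,2}=\sum_k \bar r_{k,2}s_{k,1}=-\tfrac12(q-q_0)$, and the expression collapses to $|q|^2-|q_0|^2$. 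With (\ref{transformation-modulus}) established, your derivation of (\ref{solution-r-time}) goes through, and your residue-plus-polynomial verification of $\partial_t\phi=V(q)\phi$ together with the zero-curvature argument for NLS match what the paper sketches. The two approaches are thus the same in substance; the paper lets (\ref{transformation-modulus}) emerge as the vanishing condition $G_k=0$, while you propose to verify it a priori---either packaging works once the order is fixed.
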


\begin{proof}
We shall prove that the linear system (\ref{lin-system-1}) and equations (\ref{solution-s-time})
yield equations (\ref{solution-r-time}). The transformation formula (\ref{transformation-modulus})
will be discovered naturally in this reduction. The fact that $\phi$ is a solution of the
system (\ref{fund-matrix-time}) for the potential $q$ follows from this reduction and
is proved similarly to Proposition \ref{proposition-dressing}. Finally, $q$ is sufficiently smooth as it is defined
by the transformation formula (\ref{transformation}). As $q$ is a compatibility condition between
systems (\ref{solution-r}) and (\ref{solution-r-time}), $q$ becomes a classical solution of the cubic NLS
equation (\ref{NLS}).

To derive equations (\ref{solution-r-time}),  we differentiate the linear system (\ref{lin-system-1}) and
substitute equations (\ref{solution-s-time}) to obtain
\begin{eqnarray*}
(2 \bar{z}_k^2-|q_0|^2) \sigma_3 s_k + 2 i \bar{z}_k Q(q_0) s_k
+ i Q(\partial_x q_0) \sigma_3 s_k \\
= -2 i \sum_{j =1}^n (\bar{z}_k + z_j)
\langle \sigma_3 s_j, s_k \rangle r_j -2 \sum_{j =1}^n
\langle Q(q_0) s_j, s_k \rangle r_j
+ \sum_{j =1}^n \frac{\langle s_j, s_k \rangle}{\bar{z}_k - z_j} \partial_t r_j.
\end{eqnarray*}
This can be written as
\begin{eqnarray*}
\sum_{j =1}^n \frac{\langle s_j, s_k \rangle}{\bar{z}_k - z_j} \left[
\partial_t r_j -  i (|q|^2 - 2 z_k^2) \sigma_3 r_k - 2 z_k Q(q) r_k
+ i Q(\partial_x q) \sigma_3 r_k \right] = G_k,
\end{eqnarray*}
where
\begin{eqnarray*}
G_k & := & (2 \bar{z}_k^2-|q_0|^2) \sigma_3 s_k + 2 i \bar{z}_k Q(q_0) s_k
+ i Q(\partial_x q_0) \sigma_3 s_k  + 2 i \sum_{j =1}^n (\bar{z}_k + z_j)
\langle \sigma_3 s_j, s_k \rangle r_j  \\
& \phantom{t} & + 2 \sum_{j =1}^n
\langle Q(q_0) s_j, s_k \rangle r_j + i \sum_{j =1}^n \frac{\langle s_j, s_k \rangle}{\bar{z}_k - z_j} (|q|^2 - 2 z_j^2) \sigma_3 r_j \\
& \phantom{t} & + 2 \sum_{j =1}^n \frac{\langle s_j, s_k \rangle}{\bar{z}_k - z_j} z_j Q(q) r_j
- i \sum_{j =1}^n \frac{\langle s_j, s_k \rangle}{\bar{z}_k - z_j} Q(\partial_x q) \sigma_3 r_j.
\end{eqnarray*}
If $G_k = 0$, then invertibility of the matrix $M$ in (\ref{Gramian}) implies validity of equations
(\ref{solution-r-time}). To show that $G_k = 0$, we use the linear system (\ref{lin-system-1}) and rewrite
\begin{eqnarray*}
G_k & = & (|q|^2 -|q_0|^2) \sigma_3 s_k + 2 i \bar{z}_k \left[ Q(q_0) - Q(q) \right] s_k
+ \left[ Q(\partial_x q_0) - Q(\partial_x q) \right] \sigma_3 s_k  \\
& \phantom{t} & + 2 i \sum_{j =1}^n (\bar{z}_k + z_j)
\langle \sigma_3 s_j, s_k \rangle r_j + 2 \sum_{j =1}^n \langle Q(q_0) s_j, s_k \rangle r_j \\
& \phantom{t} & - 2 i \sum_{j =1}^n (\bar{z}_k + z_j)
\langle s_j, s_k \rangle \sigma_3 r_j + 2 \sum_{j =1}^n \langle s_j, s_k \rangle Q(q) r_j.
\end{eqnarray*}
We can now use the transformation formula (\ref{transformation}) and its derivative version
in the following form:
$$
Q(\partial_x q) - Q(\partial_x q_0) = 2 \sum_{j=1}^n (\partial_x r_j) \otimes \bar{s}_j \sigma_3
+ r_j \otimes (\partial_x \bar{s}_j) \sigma_3 - \partial_x R,
$$
where
$$
R :=  \sum_{j=1}^n r_j \otimes \bar{s}_j \sigma_3 + \sigma_3 s_j \otimes \bar{r}_j =
\sum_{j=1}^n \left( \begin{array}{cc} r_{j,1} \bar{s}_{j,1} + \bar{r}_{j,1} s_{j,1} &
-r_{j,1} \bar{s}_{j,2} + \bar{r}_{j,2} s_{j,1} \\ r_{j,2} \bar{s}_{j,1} - \bar{r}_{j,1} s_{j,2} &
-r_{j,2} \bar{s}_{j,2} - \bar{r}_{j,2} s_{j,2} \end{array} \right).
$$
Substituting the Zakharov--Shabat systems (\ref{solution-s}) and (\ref{solution-r}), the transformation formula (\ref{transformation})
and its derivative version to the expression for $G_k$ gives
\begin{eqnarray*}
G_k = (|q|^2 -|q_0|^2 + \partial_x R ) \sigma_3 s_k.
\end{eqnarray*}
It follows from the explicit expression (\ref{ma3})
for the set $\{ r_k \}_{k=1}^n$ that $R$ is a diagonal matrix. Moreover,
the difference between the two diagonal entries of the matrix $R$
is constant in $x$ because
\begin{eqnarray*}
\sum_{k=1}^n \left( r_{k,1} \bar{s}_{k,1} + \bar{r}_{k,1} s_{k,1} + r_{k,2} \bar{s}_{k,2}
+ \bar{r}_{k,2} s_{k,2} \right) & = & \frac{2}{D} \sum_{k=1}^n \sum_{j=1}^n
D_{j,k} \langle s_k, s_1 \rangle \\
& = & \frac{2 i}{D} \sum_{k=1}^n \sum_{j=1}^n
(\bar{z}_j - z_k) D_{j,k} M_{j,k} \\
& = & 2 i \left( \sum_{j=1}^n \bar{z}_j - \sum_{k=1}^n  z_k \right).
\end{eqnarray*}
Consequently, we have
\begin{eqnarray*}
\partial_x \sum_{k=1}^n \left( r_{k,1} \bar{s}_{k,1} + \bar{r}_{k,1} s_{k,1} + r_{k,2} \bar{s}_{k,2}
+ \bar{r}_{k,2} s_{k,2} \right)  = 0.
\end{eqnarray*}
On the other hand, we have
\begin{eqnarray*}
\sum_{k=1}^n \left( r_{k,1} \bar{s}_{k,1} + \bar{r}_{k,1} s_{k,1} - r_{k,2} \bar{s}_{k,2}
- \bar{r}_{k,2} s_{k,2} \right) & = & \frac{2}{D} \sum_{k=1}^n \sum_{j=1}^n
D_{j,k} \langle \sigma_3 s_k, s_j \rangle \\
& = & -\frac{2}{D} \sum_{k=1}^n \sum_{j=1}^n
D_{j,k} \partial_x M_{j,k} \\
& = & -2 \partial_x \log(D).
\end{eqnarray*}
The last two expressions show that $G_k = 0$ if and only if $|q|^2$ and $|q_0|^2$ are related by
the transformation formula (\ref{transformation-modulus}).
\end{proof}

\begin{rem}
Transformations formulas (\ref{transformation-q}) and (\ref{transformation-modulus}) are known
to be compatible from the Hirota bilinear method \cite{AblowitzSegur} of constructing explicit solutions
of the cubic NLS equation (\ref{NLS}).
\end{rem}

By using the system (\ref{solution-s-time}) for $q_0 = 0$, $n = 1$, and $z_1 = \xi_1 + i \eta_1$ with $\eta_1 > 0$,
we include the time evolution of the $1$-soliton of the cubic NLS equation.
In this way, we find the solution for the vector $s_1 = (\mathfrak{b}_1,\mathfrak{b}_2)$ in the form
$$
\left\{ \begin{array}{l}
\mathfrak{b}_1 = e^{-(\eta_1 + i \xi_1)(x-x_0) + 2 i (\eta_1 + i \xi_1)^2 t + i \theta}, \\
\mathfrak{b}_2 = - e^{(\eta_1 + i \xi_1)(x-x_0) - 2 i (\eta_1 + i \xi_1)^2 t - i \theta},
\end{array} \right.
$$
and obtain the $1$-soliton in the form
\begin{equation}
\label{1-soliton-time}
q = 2 \eta_1 {\rm sech}(2 \eta_1 (x + 2 \xi_1 t -x_0)) e^{2 i \theta - 2 i \xi_1 (x + 2 \xi_1 t -x_0) + 4 i \eta_1^2 t}.
\end{equation}
For $t = 0$, this expression coincides with  (\ref{1-soliton}).

\section{Construction of multi-solitons}

The multi-soliton solutions of the cubic NLS equation (\ref{NLS})
are obtained by applying the dressing transformation of
Propositions \ref{proposition-dressing} and \ref{proposition-time}
with zero solution $q_0 = 0$ for general $n \geq 1$. In this case, we define
solutions of the linear systems (\ref{solution-s}) and (\ref{solution-s-time}) with
$q_0 = 0$ by
\begin{equation}\label{ma1}
s_k = e^{-i \bar{z}_k (x-x_k) \sigma_3 - 2 i \bar{z}_k^2 t \sigma_3 + i \theta_k \sigma_3}
\sigma_3 {\bf 1}, \quad 1 \leq k \leq n,
\end{equation}
where ${\bf 1} = (1,1)$, $z_k = \xi_k + i \eta_k$, and real parameters $(\xi_k, \eta_k, x_k, \theta_k)$
are arbitrary with $\eta_k > 0$. The set $\{ r_k \}_{k=1}^n$ is uniquely found
in the form (\ref{ma3}). It follows from the transformation formula (\ref{transformation-q}) with
$q_0 = 0$ that the $n$-soliton solutions are defined in the form $q = \frac{2 \Sigma}{D}$, where
\begin{equation}
\label{matrix-Sigma}
\Sigma := - \sum_{k=1}^n \sum_{j=1}^n D_{j,k} F_{j,k}
\end{equation}
and we have denoted
\begin{equation}
\label{matrix-F}
F_{j,k} := s_{j,1} \bar{s}_{k,2}, \quad 1 \leq k,j \leq n.
\end{equation}

For $2$-solitons with $n = 2$, we obtain the explicit form of the solution:
\begin{equation}\label{2sigmadelta}
q^S(x,t;\eta_1,\eta_2,\xi_1,\xi_2,x_1,x_2,\theta_1,\theta_2) = \frac{2 \Sigma^S}{D^S}
\end{equation}
with
\begin{eqnarray*}
\Sigma^S
& := & \frac{e^{-2\eta_2 \varphi_2 + 2 i \psi_1}+e^{2\eta_2 \varphi_2 + 2 i \psi_1}}{2 \eta_2}
+\frac{e^{-2\eta_1 \varphi_1+ 2 i \psi_2} + e^{2\eta_1 \varphi_1 + 2i \psi_2}}{2\eta_1} \\
& \phantom{t} &
- \frac{e^{-2\eta_2 \varphi_2 + 2 i \psi_1} + e^{2\eta_1 \varphi_1 + 2 i \psi_2}}{\eta_1+\eta_2+ i (\xi_1-\xi_2)}
- \frac{e^{-2\eta_1 \varphi_1 + 2 i \psi_2}+ e^{2\eta_2\varphi_2 + 2i \psi_1}}{\eta_1+\eta_2 - i (\xi_1-\xi_2)}
\end{eqnarray*}
and
\begin{eqnarray*}
D^S & := & \frac{(e^{-2\eta_1 \varphi_1}+e^{2\eta_1 \varphi_1})(e^{-2\eta_2\varphi_2}+e^{2\eta_2 \varphi_2})}{4\eta_1\eta_2} \\
& \phantom{t} &  -\frac{
(e^{-\eta_1 \varphi_1 - \eta_2 \varphi_2 + i \psi_1 - i \psi_2} +
e^{\eta_1 \varphi_1 + \eta_2 \varphi_2 - i \psi_1 + i \psi_2})
(e^{-\eta_1 \varphi_1 - \eta_2 \varphi_2 - i \psi_1 + i \psi_2} +
e^{\eta_1 \varphi_1 + \eta_2 \varphi_2 + i \psi_1 - i \psi_2})}{(\eta_1+\eta_2)^2+(\xi_1 - \xi_2)^2} ,
\end{eqnarray*}
where
\begin{eqnarray}
\left\{ \begin{array}{l}
\varphi_1 = x + 4 \xi_1 t - x_1, \\
\varphi_2 = x + 4 \xi_2 t - x_2, \\
\psi_1 = \theta_1 - \xi_1 (x + 2 \xi_1 t -x_1) + 2 \eta_1^2 t, \\
\psi_2 = \theta_2 - \xi_2 (x + 2 \xi_2 t -x_2) + 2 \eta_2^2 t.
\end{array} \right.
\label{definition-phases}
\end{eqnarray}

Figure \ref{fig-2-sol} shows two particular types of the dynamics of $2$-solitons:
scattering of two solitons with nonequal speeds for $\xi_1 \neq \xi_2$ (left) and oscillations of bound
states of two solitons with equal speeds for $\xi_1 = \xi_2$ (right) if $\eta_1 \neq \eta_2$. Note that the solution
becomes zero if $\xi_1 = \xi_2$ and $\eta_1 = \eta_2$.

\begin{figure}[h]
\begin{center}
\includegraphics[scale=0.45]{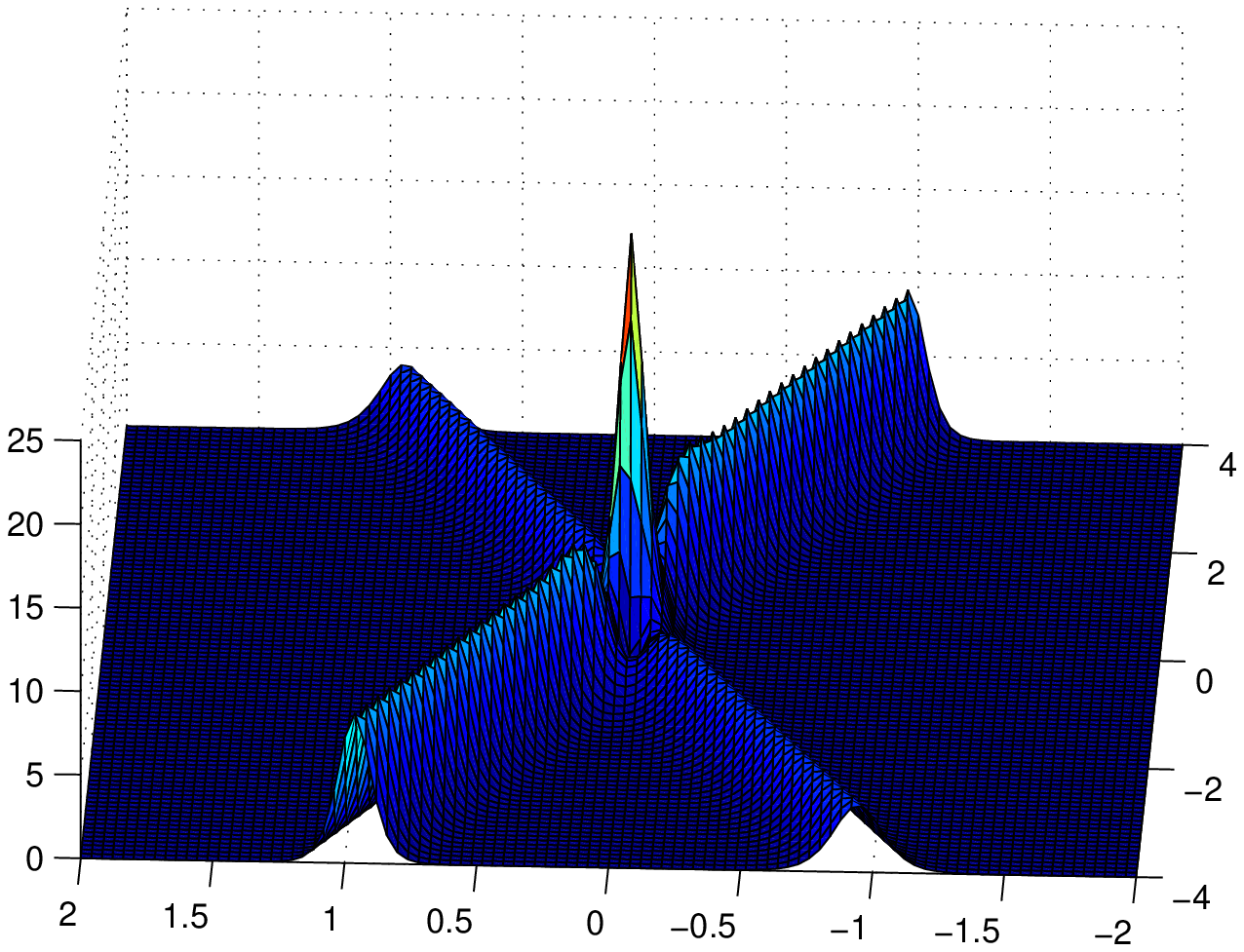}
\includegraphics[scale=0.45]{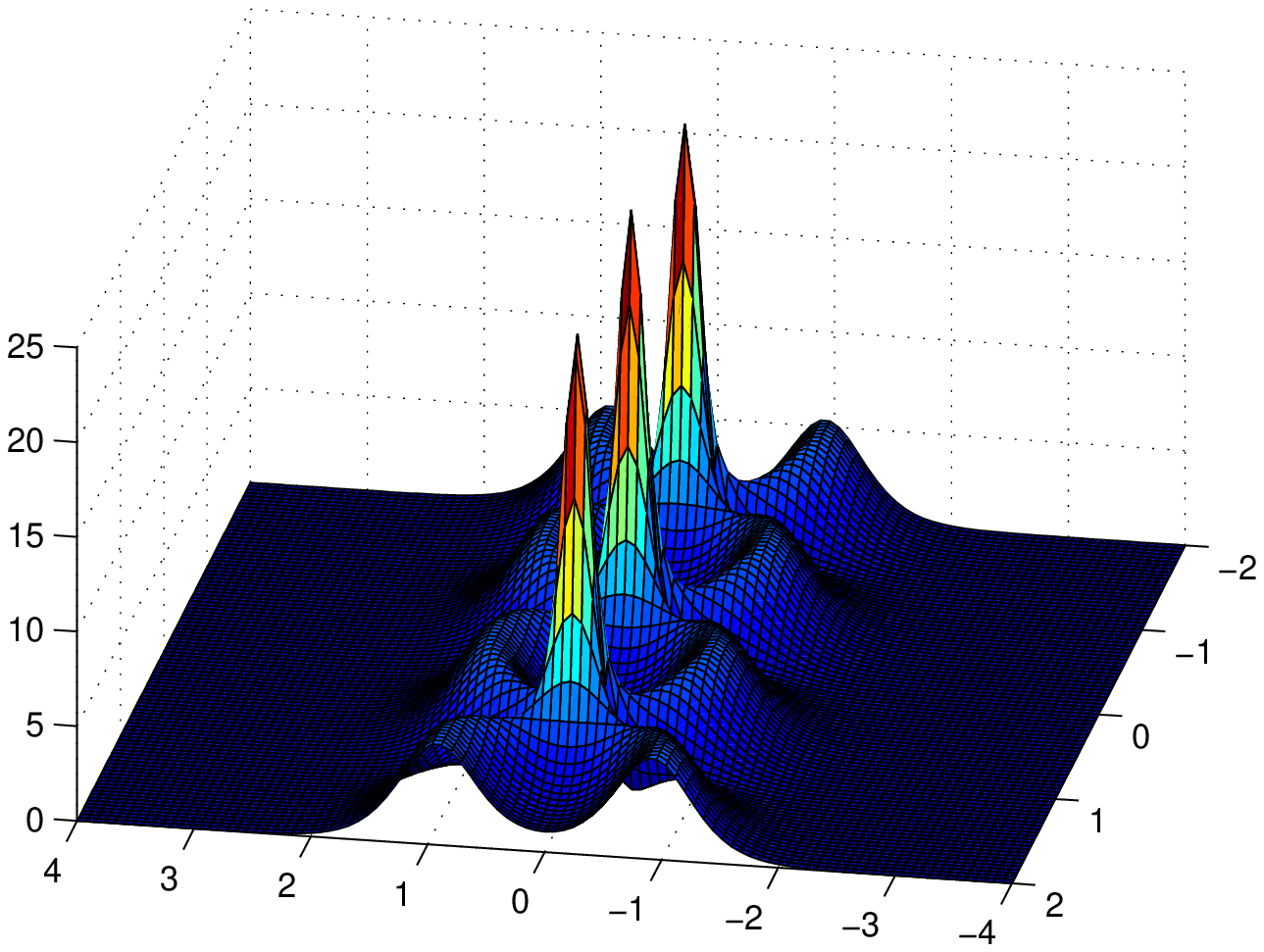}
\end{center}
\caption{\label{fig-2-sol}
Surface plots of $|q^S|^2$ versus $(x,t)$ for the $2$-soliton solutions (\ref{2sigmadelta})
with $\eta_1 = 1$, $\eta_2 = 1.5$, and either $\xi_1 = -\xi_2 = 1$ (left) or
$\xi_1 = \xi_2 = 0$ (right). Other translation parameters are set to zero.}
\end{figure}

\section{Analysis of neighborhood of multi-solitons}

We shall now consider the dressing transformation for small but nonzero $q_0$ in $L^2(\mathbb{R})$.
Recall that the $L^2(\mathbb{R})$ norm of a solution of the cubic NLS equation (\ref{NLS})
is conserved in time $t$. To work with the dressing transformation, we consider a classical solution $q_0$
of the cubic NLS equation (\ref{NLS}) in $L^2(\mathbb{R})$ with constant $\| q_0 \|_{L^2(\mathbb{R})}$.

Let $s_j$ be solutions of the spectral problems
(\ref{solution-s}) and (\ref{solution-s-time})
associated with small but nonzero $q_0$ in $L^2(\mathbb{R})$
for $z_j = \xi_j + i \eta_j$ with $\eta_j > 0$. We write this vector in the separable form
\begin{equation}
\label{defs1}
s_j := e^{-i \bar{z}_j (x-x_j) - 2 i \bar{z}_j^2 t + i \theta_j} f_j
- e^{i \bar{z}_j (x-x_j) + 2 i \bar{z}_j^2 t - i \theta_j} g_j
\end{equation}
where $(x_j,\theta_j)$ are arbitrary real parameters and
components of the $2$-vectors $f_j = (\mathfrak{a}_1^j,\mathfrak{a}_2^j)$ and
$g_j = (\mathfrak{b}_1^j,\mathfrak{b}_2^j)$ satisfy the boundary conditions
\begin{equation}
\label{defs-boundary-conditions-1}
\left\{ \begin{array}{l}
\lim\limits_{x \to -\infty} \mathfrak{a}_1^j = 1, \\
\lim\limits_{x \to +\infty} e^{-2 i \bar{z}_j (x-x_j) - 4 i \bar{z}_j^2 t + 2 i \theta_j} \mathfrak{a}_2^j = 0,
\end{array} \right.
\end{equation}
and
\begin{equation}
\label{defs-boundary-conditions-2}
\left\{ \begin{array}{l}
\lim\limits_{x \to -\infty} e^{2 i \bar{z}_j (x-x_j) + 4 i \bar{z}_j^2 t - 2 i \theta_j} \mathfrak{b}_1^j = 0, \\
\lim\limits_{x \to +\infty} \mathfrak{b}_2^j = 1. \end{array} \right.
\end{equation}
If $q_0 = 0$, then we have unique solutions $f_j = (1,0)$ and $g_j = (0,1)$, so
that the separable form (\ref{defs1}) recovers (\ref{ma1}).
Using the same analysis as in Lemmas 4.1 and 4.3 of \cite{MP}, we obtain the following result.

\begin{prop}
Let $q_0$ be a classical solution of the cubic NLS equation
(\ref{NLS}) in $L^2(\mathbb{R})$. There exists a positive constant $\e_0$ such that
if $\| q_0  \|_{L^2} \leq \e_0$, then the spectral problems
(\ref{solution-s}) and (\ref{solution-s-time}) for $z_j = \xi_j + i \eta_j$
admit a solution $s_j$ satisfying (\ref{defs1}), (\ref{defs-boundary-conditions-1}) and (\ref{defs-boundary-conditions-2}).
For all $t \in \mathbb{R}$, components of $f_j = (\mathfrak{a}_1^j,\mathfrak{a}_2^j)$ and
$g_j = (\mathfrak{b}_1^j,\mathfrak{b}_2^j)$ belong to the class
\begin{equation}
(\mathfrak{a}_1^j,\mathfrak{a}_2^j) \in L^{\infty}(\mathbb{R}) \times (L^{\infty}(\mathbb{R}) \cap L^2(\mathbb{R})), \quad
(\mathfrak{b}_1^j,\mathfrak{b}_2^j) \in (L^{\infty}(\mathbb{R}) \cap L^2(\mathbb{R})) \times L^{\infty}(\mathbb{R}),
\end{equation}
and there exists a positive $q_0$-independent constant $C$ such that
\begin{eqnarray}
\label{fghl}
\norm{\mathfrak{a}_1^j-1}_{L^{\infty}}+\norm{\mathfrak{a}_2^j}_{L^2\cap L^{\infty}}
+ \norm{\mathfrak{b}_1^j}_{L^2\cap L^{\infty}} + \norm{\mathfrak{b}_2^j - 1}_{L^{\infty}} \leq C
\norm{q_0}_{L^2}.
\end{eqnarray}
\end{prop}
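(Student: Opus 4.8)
The plan is to treat the spectral problems (\ref{solution-s}) and (\ref{solution-s-time}) one spectral parameter at a time: for distinct indices the vectors $s_j$ solve decoupled linear systems, so it suffices to reproduce, for each fixed $z_j = \xi_j + i \eta_j$ with $\eta_j > 0$, the $1$-soliton construction of Jost functions from \cite{MP}. Fixing $t$ and regarding $q_0 = q_0(\cdot,t)$ as a potential in $L^2(\mathbb{R})$, I substitute the separable ansatz (\ref{defs1}) into (\ref{solution-s}). Writing $E_j := e^{-i\bar{z}_j(x-x_j) - 2i\bar{z}_j^2 t + i \theta_j}$ and using $\partial_x E_j^{\pm 1} = \mp i \bar{z}_j E_j^{\pm 1}$, the explicit exponential and phase factors cancel and one checks that $E_j f_j$ and $E_j^{-1} g_j$ must each solve the Zakharov--Shabat system. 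In components this reads
\begin{align*}
\partial_x \mathfrak{a}_1^j &= q_0 \mathfrak{a}_2^j, & \partial_x \mathfrak{a}_2^j &= 2 i \bar{z}_j \mathfrak{a}_2^j - \bar{q}_0 \mathfrak{a}_1^j, \\
\partial_x \mathfrak{b}_1^j &= - 2 i \bar{z}_j \mathfrak{b}_1^j + q_0 \mathfrak{b}_2^j, & \partial_x \mathfrak{b}_2^j &= - \bar{q}_0 \mathfrak{b}_1^j,
\end{align*}
so that the explicit factors in (\ref{defs1}) and the parameters $(x_j,\theta_j)$ do not enter the equations for $f_j$ and $g_j$.

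Imposing the boundary conditions (\ref{defs-boundary-conditions-1})--(\ref{defs-boundary-conditions-2}), which select the bounded (non-growing) branch in each case, I recast the system as the Volterra integral equations
\begin{align*}
\mathfrak{a}_1^j(x) &= 1 + \int_{-\infty}^x q_0(y) \mathfrak{a}_2^j(y)\, dy, & \mathfrak{a}_2^j(x) &= \int_x^{+\infty} e^{2 i \bar{z}_j (x-y)} \bar{q}_0(y)\, \mathfrak{a}_1^j(y)\, dy, \\
\mathfrak{b}_2^j(x) &= 1 + \int_x^{+\infty} \bar{q}_0(y)\, \mathfrak{b}_1^j(y)\, dy, & \mathfrak{b}_1^j(x) &= \int_{-\infty}^x e^{-2 i \bar{z}_j (x-y)} q_0(y) \mathfrak{b}_2^j(y)\, dy,
\end{align*}
whose kernels decay like $e^{-2 \eta_j |x-y|}$ on the relevant half-lines since $\eta_j > 0$. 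I would run a contraction mapping for the pair $(\mathfrak{a}_1^j - 1, \mathfrak{a}_2^j)$ in $L^\infty(\mathbb{R}) \times (L^2(\mathbb{R}) \cap L^\infty(\mathbb{R}))$, and analogously for $(\mathfrak{b}_1^j, \mathfrak{b}_2^j - 1)$. Cauchy--Schwarz applied to the first equation gives $\norm{\mathfrak{a}_1^j - 1}_{L^\infty} \le \norm{q_0}_{L^2} \norm{\mathfrak{a}_2^j}_{L^2}$, while Young's convolution inequality together with Cauchy--Schwarz applied to the second gives $\norm{\mathfrak{a}_2^j}_{L^2 \cap L^\infty} \le C_{\eta_j} \norm{q_0}_{L^2} \norm{\mathfrak{a}_1^j}_{L^\infty}$ with $C_{\eta_j}$ depending only on $\eta_j$. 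For $\norm{q_0}_{L^2} \le \e_0$ small enough these estimates make the map a contraction, yielding existence and uniqueness; bootstrapping the two inequalities controls $\norm{\mathfrak{a}_1^j}_{L^\infty} \le 2$ and then produces the linear bound (\ref{fghl}). The treatment of $g_j$ is entirely analogous.

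For the time dependence I carry out the construction at each fixed $t$ with potential $q_0(\cdot,t)$; the conservation law (\ref{conservation}) gives $\norm{q_0(\cdot,t)}_{L^2} = \norm{q_0(\cdot,0)}_{L^2}$, so the constant $C$ in (\ref{fghl}) is independent of both $q_0$ and $t$. To see that this same $s_j$ solves the evolution equation (\ref{solution-s-time}), I would use compatibility of the Lax pair: because $q_0$ solves (\ref{NLS}) classically the $x$- and $t$-flows commute, so if $\tilde{s}_j$ denotes the solution of the ODE (\ref{solution-s-time}) with data $\tilde{s}_j|_{t=0} = s_j(\cdot,0)$, then $\tilde{s}_j(\cdot,t)$ solves (\ref{solution-s}) for every $t$; since the temporal equation reduces to its constant-coefficient form at spatial infinity, $\tilde{s}_j$ keeps the normalizations (\ref{defs-boundary-conditions-1})--(\ref{defs-boundary-conditions-2}), and the uniqueness established above identifies $\tilde{s}_j$ with the fixed-$t$ Jost solution $s_j$. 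The boundary limits themselves are automatic once the integral equations are solved, since each integrand is a product of two $L^2$ functions, hence in $L^1$.

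I expect the genuinely delicate point to be this last step --- establishing that the spatial Jost solution simultaneously solves the temporal equation (\ref{solution-s-time}) with bounds uniform in $t$ --- rather than the fixed-point construction, which is standard Volterra theory. The decoupling across the index $j$ makes the $n$-soliton case no harder analytically than the $1$-soliton case of \cite{MP}; the substance of the proposition is exactly the uniform-in-$t$ control afforded by the conserved $L^2$ norm together with the commuting-flows structure, which is what Lemmas 4.1 and 4.3 of \cite{MP} provide.
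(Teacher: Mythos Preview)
Your proposal is correct and matches the paper's approach: the paper does not give an independent proof of this proposition but simply invokes Lemmas~4.1 and~4.3 of \cite{MP}, and what you have sketched---the reduction to decoupled Volterra integral equations for $(\mathfrak{a}_1^j-1,\mathfrak{a}_2^j)$ and $(\mathfrak{b}_1^j,\mathfrak{b}_2^j-1)$, the contraction in $L^\infty \times (L^2\cap L^\infty)$ via Cauchy--Schwarz and Young, and the identification of the spatial Jost solution with the time-evolved one through the commuting-flows argument---is precisely the content of those lemmas. Your derivation of the component equations and integral formulations is accurate, and your observation that the index $j$ plays no role beyond fixing $\eta_j>0$ correctly explains why the $n$-soliton case requires nothing new over the $1$-soliton analysis in \cite{MP}.
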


We now construct a neighborhood of a multi-soliton by using the dressing
transformation in Propositions \ref{proposition-dressing} and \ref{proposition-time}.
The arguments are valid for all multi-solitons, but we give details of
analysis in the case of $2$-solitons, because of the nature of the perturbations
yielding page-long computations. At the end of the section, we summarize
the key steps and modifications required to obtain the result in the general case $n \geq 1$.

Let $M$, $\Sigma$, and $F$ be the matrices defined by
\eqref{Gramian}, \eqref{matrix-Sigma}, and \eqref{matrix-F},
associated to $s_1$ and $s_2$ as given in  \eqref{defs1}.
Let $M^S$, $\Sigma^S$, and $F^S$ be the matrices corresponding  to the 2-soliton $q^S$
given by \eqref{2sigmadelta}. We denote
$D = \det(M)$ and $D^S = \det(M^S)$.
The following result tells us that if $q_0\in L^2$ is small,
then the transformation formula \eqref{transformation-q}
with $s_1$ and $s_2$ given by \eqref{defs1} yields a new solution $q$
near the $2$-soliton $q^S$ in the $L^2$ norm.

\begin{prop}\label{PPP}
Let $q_0$ be a classical solution of the cubic NLS equation
(\ref{NLS}) in $L^2(\mathbb{R})$ and $q^S$ be the $2$-soliton given explicitly by (\ref{2sigmadelta})
with $(\xi_1,\eta_1) \neq (\xi_2,\eta_2)$.
There exists a positive constant $\e_0$ such that
if $\| q_0 \|_{L^2} \leq \e_0$, then there exists a positive $q_0$-independent constant $C$ such that
the function $q$ given by the transformation formula \eqref{transformation-q}
with the functions $s_1$ and $s_2$ in (\ref{defs1}), (\ref{defs-boundary-conditions-1}) and (\ref{defs-boundary-conditions-2})
satisfies
\be\label{normQ-Q_S}
\norm{q-q^S}_{L^{2}} \leq C \| q_0 \|_{L^2}, \quad t \in \mathbb{R}.
\ee
\end{prop}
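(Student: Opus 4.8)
The plan is to compare the reconstructed potential $q = q_0 + 2\Sigma/D$, read off from \eqref{transformation-q}, \eqref{matrix-Sigma} and \eqref{matrix-F}, with the $2$-soliton $q^S = 2\Sigma^S/D^S$ obtained from the very same formulas when $q_0 = 0$, i.e.\ when $f_j = (1,0)$ and $g_j = (0,1)$ in \eqref{defs1}. Since $\norm{q_0}_{L^2}\le\e_0$ is the only quantity we are allowed to use, it suffices to estimate $\norm{\Sigma/D - \Sigma^S/D^S}_{L^2}$, and I would start from the identity
\[
\frac{\Sigma}{D}-\frac{\Sigma^S}{D^S}
= \frac{\Sigma-\Sigma^S}{D} - \frac{q^S}{2}\cdot\frac{D-D^S}{D},
\]
which isolates a numerator difference $\Sigma-\Sigma^S$ and a denominator difference $D-D^S$. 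For $n=2$ everything is explicit: the cofactors are $D_{11}=M_{22}$, $D_{22}=M_{11}$, $D_{12}=-M_{21}$, $D_{21}=-M_{12}$, and $D = M_{11}M_{22}-\abs{M_{12}}^2$ because $M$ is Hermitian, so $\Sigma=-\sum_{j,k}D_{j,k}F_{j,k}$ and $D$ are concrete quadratic expressions in the entries of $M$ from \eqref{Gramian} and of $F$ from \eqref{matrix-F}.

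Next I would record the perturbation structure. Writing $f_j=(1,0)+\delta f_j$ and $g_j=(0,1)+\delta g_j$, the bound \eqref{fghl} gives $\norm{\delta f_j},\norm{\delta g_j}\lesssim\norm{q_0}_{L^2}$, with the crucial distinction that the \emph{diagonal} components $\mathfrak a_1^j-1$ and $\mathfrak b_2^j-1$ are controlled only in $L^\infty$, whereas the \emph{off-diagonal} components $\mathfrak a_2^j$ and $\mathfrak b_1^j$ are controlled in $L^2\cap L^\infty$. Substituting $s_j=s_j^S+\delta s_j$ into $\langle s_j,s_k\rangle$ and into $F_{j,k}=s_{j,1}\bar s_{k,2}$ and expanding the products of the two exponentials in \eqref{defs1} reveals the structural dichotomy that is the heart of the estimate: every exponential combination whose coefficient is a product of diagonal components ($\mathfrak a_1,\mathfrak b_2$) already survives in the soliton quantities $M^S,F^S,\Sigma^S$, while every combination whose coefficient involves an off-diagonal component ($\mathfrak a_2,\mathfrak b_1$) vanishes when $q_0=0$. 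Consequently each term of $M-M^S$, $F-F^S$ and $\Sigma-\Sigma^S$ is of exactly one of two types: either (a) it carries an off-diagonal, $L^2$-small factor, or (b) it consists of a soliton-type exponential multiplied by an $L^\infty$-small diagonal-coefficient perturbation such as $\mathfrak a_1^j\bar{\mathfrak b}_2^k-1$.

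I would then treat the denominator separately. Using \eqref{definition-phases}, the explicit $D^S$ is strictly positive and grows like $\tfrac{1}{4\eta_1\eta_2}\,e^{2\eta_1\abs{\varphi_1}+2\eta_2\abs{\varphi_2}}$ as $\abs{x}\to\infty$ up to the bounded interaction correction, so $D^S$ is bounded below, uniformly in $t$, by a positive multiple of this $\cosh$-type envelope; here the hypothesis $(\xi_1,\eta_1)\neq(\xi_2,\eta_2)$ is precisely what prevents the interaction term from cancelling the product $M_{11}^SM_{22}^S$. Because every perturbation factor is in fact pointwise bounded by $C\norm{q_0}_{L^2}$ (the off-diagonal ones lie in $L^2\cap L^\infty$), and because the off-diagonal exponential combinations are dominated by the envelope of $D^S$, I would prove the pointwise estimate $\abs{D-D^S}\le C\norm{q_0}_{L^2}\,D^S$, so that $D\ge(1-C\e_0)D^S\ge\tfrac12 D^S>0$ for $\e_0$ small. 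Dividing by $D$ then loses nothing beyond a harmless constant.

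With $D\ge\tfrac12 D^S$ in hand, I would close the two terms of the displayed identity in $L^2$. The denominator term $\tfrac{q^S}{2}\cdot\tfrac{D-D^S}{D}$ is the product of the localized profile $q^S\in L^2$ with the factor $(D-D^S)/D\in L^\infty$ of norm $\lesssim\norm{q_0}_{L^2}$, hence is $O(\norm{q_0}_{L^2})$ in $L^2$. For $(\Sigma-\Sigma^S)/D$ I would use the dichotomy term by term: in each summand of type (a) the accompanying ratio $D_{j,k}\,e^{\pm\cdots}/D$ is bounded in $L^\infty$ by the same envelope computations that make $q^S$ localized, so the $L^2$-smallness of $\mathfrak a_2$ or $\mathfrak b_1$ transfers directly; in each summand of type (b) the ratio $D_{j,k}\,e^{\pm\cdots}/D$ is itself exponentially localized, i.e.\ lies in $L^2$, exactly as the corresponding term of $q^S=-\tfrac{2}{D^S}\sum D^S_{j,k}F^S_{j,k}$ does, and multiplying by the $L^\infty$-small coefficient keeps it $O(\norm{q_0}_{L^2})$ in $L^2$. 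Summing the finitely many terms yields \eqref{normQ-Q_S}, uniformly in $t$ because \eqref{fghl} holds for all $t$ and every envelope above is translation-covariant in $(x,t)$. I expect the main obstacle to be exactly this bookkeeping: for each of the finitely many terms one must verify that the exponentially growing soliton factors multiplying an $L^\infty$-only perturbation are always divided by a determinant that grows at least as fast, so that the outcome is genuinely localized and lies in $L^2$ rather than merely in $L^\infty$; the pointwise lower bound $D\gtrsim D^S$, resting on the non-degeneracy $(\xi_1,\eta_1)\neq(\xi_2,\eta_2)$, is the linchpin that makes this cancellation rigorous.
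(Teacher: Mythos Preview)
Your proposal is correct and follows essentially the same strategy as the paper: both arguments rest on the pointwise lower bound $D\gtrsim D^S\gtrsim \cosh(2(\eta_1\varphi_1+\eta_2\varphi_2))+\cosh(2(\eta_1\varphi_1-\eta_2\varphi_2))$ (using $(\xi_1,\eta_1)\neq(\xi_2,\eta_2)$), and both exploit the dichotomy that perturbations carrying an off-diagonal factor $\mathfrak a_2^j,\mathfrak b_1^j$ are $L^2\cap L^\infty$-small while purely diagonal perturbations are only $L^\infty$-small. The only differences are organizational: the paper works term by term on the quantities $F_{j,k}D_{j,k}/D-F_{j,k}^SD_{j,k}^S/D^S$ and splits the line into a compact interval $\{|x|<R\}$ (where $L^\infty$ control suffices) and its complement (where the $L^2$ structure of the off-diagonal factors is used), whereas you package the same estimates through the single identity $\Sigma/D-\Sigma^S/D^S=(\Sigma-\Sigma^S)/D-\tfrac{q^S}{2}(D-D^S)/D$ and your type~(a)/type~(b) classification, avoiding the compact/non-compact split. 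Your route is slightly more streamlined, but the analytic content---which exponential ratios lie in $L^\infty$ and which in $L^2$---is identical to the paper's Step~3.
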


\begin{proof}
For convenience, we express bound (\ref{normQ-Q_S}) as $\norm{q-q^S}_{L^{2}} \lesssim \ep$
with $\ep := \norm{q_0}_{L^2}$ and use these notations in
the rest of the paper. The bound (\ref{normQ-Q_S}) follows from the triangle inequality
if we can show that
\begin{equation}
\label{bound-tech}
\norm{\frac{2\Sigma}{D}-q^S}_{L^2}\lesssim\e.
\end{equation}
In turn, since in this case  $D_{i,j}=(-1)^{i+j}M_{3-i,3-j},$ this bound
will be a consequence of the bounds
\be\label{TypeI}
\norm{\frac{F_{1,1}M_{2,2}}{D}-\frac{F^S_{1,1}M^S_{2,2}}{D^S}}_{L^2},\quad
\norm{\frac{F_{2,2}M_{1,1}}{D}-\frac{F^S_{2,2}M^S_{1,1}}{D^S}}_{L^2}\lesssim\e
\ee
and
\be\label{TypeII}
\norm{\frac{F_{2,1}M_{1,2}}{D}-\frac{F^S_{2,1}M^S_{1,2}}{D^S}}_{L^2},\quad
\norm{\frac{F_{1,2}M_{2,1}}{D}-\frac{F^S_{1,2}M^S_{2,1}}{D^S}}_{L^2}\lesssim\e.
\ee

In fact, to prove \eqref{TypeI} and \eqref{TypeII}, it will suffice to show
the estimates for $F_{1,1}M_{2,2}$ and $F_{2,1}M_{1,2}$, since the estimates
for $F_{2,2}M_{1,1}$ and $F_{1,2}M_{2,1}$ are analogous.

We divide the proof into three steps.
In the first step, we write down global estimates in $L^2(\mathbb{R})$ and $L^{\infty}(\mathbb{R})$
measuring the discrepancy between $(F,M)$ and $(F^S,M^S).$
From these, in the second step, we obtain for any $t \in \mathbb{R}$ that
$$
\norm{\frac{2\Sigma}{D}-\frac{2\Sigma^S}{D^S}}_{L^{\infty}([-R,R])}\lesssim\ep,
$$
for some $R$ large but fixed.

Finally, the growth properties of $D$ (and $D^S$) together with the $L^2(\mathbb{R})$ control
on the difference between $(F,M)$ and $(F^S, M^S)$ obtained in the first part,
allows us to derive  the result outside a compact set. This result together with
the $L^{\infty}([-R,R])$ estimate from the second step yields the desired conclusion.
\vskip.4in

\underline{Step 1: Global estimates for $M_{1,2}, M_{2,2}, F_{2,1}$ and $F_{1,1}$ }\\
From \eqref{defs1}, we have
\begin{eqnarray*}
M_{1,2}=\frac{1}{\eta_1+\eta_2+i(\xi_1-\xi_2)}\Big[
e^{-\eta_2\varphi_2-i\psi_2-\eta_1\varphi_1+i\psi_1}(\mathfrak{a}_1^2\mathfrak{a}_1^1+\mathfrak{a}_2^2\mathfrak{a}_2^1)
-e^{\eta_2\varphi_2+i\psi_2-\eta_1\varphi_1+i\psi_1} \\
\times (\mathfrak{b}_1^2\mathfrak{a}_1^1+\mathfrak{b}_2^2\mathfrak{a}_2^1)
 -e^{-\eta_2\varphi_2-i\psi_2+\eta_1\varphi_1-i\psi_1}(\mathfrak{a}_1^2\mathfrak{b}_1^1+\mathfrak{a}_2^2\mathfrak{b}_2^1)
+e^{\eta_2\varphi_2+i\psi_2+\eta_1\varphi_1-i\psi_1}(\mathfrak{b}_1^2\mathfrak{b}_1^1+\mathfrak{b}_2^2\mathfrak{b}_2^1)
\Big].
\end{eqnarray*}
Using inequality \eqref{fghl}, we expand this expression as follows:
\begin{eqnarray*}
M_{1,2}&=&\frac{1}{\eta_1+\eta_2+i(\xi_1-\xi_2)}\Big[
e^{-\eta_2\varphi_2-i\psi_2-\eta_1\varphi_1+i\psi_1}\nonumber\\
&&\times
[(1+\O_{L^{\infty}}(\ep))(1+\O_{L^{\infty}}(\ep))+\O_{L^2\cap L^{\infty}}(\ep)\O_{L^2\cap L^{\infty}}(\ep)]\nonumber\\
&&-e^{\eta_2\varphi_2+i\psi_2-\eta_1\varphi_1+i\psi_1}
[\O_{L^2\cap L^{\infty}}(\ep)(1+\O_{L^{\infty}}(\ep))+(1+\O_{L^{\infty}}(\ep))\O_{L^2\cap L^{\infty}}(\ep)]\nonumber\\
&& -e^{-\eta_2\varphi_2-i\psi_2+\eta_1\varphi_1-i\psi_1}
[(1+\O_{L^{\infty}}(\ep))\O_{L^2\cap L^{\infty}}(\ep)+\O_{L^2\cap L^{\infty}}(\ep)(1+\O_{L^{\infty}}(\ep))]\nonumber\\
&&
+e^{\eta_2\varphi_2+i\psi_2+\eta_1\varphi_1-i\psi_1}
[\O_{L^2\cap L^{\infty}}(\ep)\O_{L^2\cap L^{\infty}}(\ep)+(1+\O_{L^{\infty}}(\ep))(1+\O_{L^{\infty}}(\ep))]
\Big]\nonumber\\
&=& \frac{(e^{-\eta_2\varphi_2-i\psi_2-\eta_1\varphi_1+i\psi_1}+
e^{\eta_2\varphi_2+i\psi_2+\eta_1\varphi_1-i\psi_1})}{\eta_1+\eta_2+i(\xi_1-\xi_2)}\nonumber\\
&&+\frac{(e^{-\eta_2\varphi_2-i\psi_2-\eta_1\varphi_1+i\psi_1}+
e^{\eta_2\varphi_2+i\psi_2+\eta_1\varphi_1-i\psi_1})}{\eta_1+\eta_2+i(\xi_1-\xi_2)}[\mathcal{O}_{ L^{\infty}}(\ep)+\mathcal{O}_{L^2\cap L^{\infty}}(\ep)] \\
&& -\frac{(e^{\eta_2\varphi_2+i\psi_2-\eta_1\varphi_1+i\psi_1}+
e^{-\eta_2\varphi_2-i\psi_2+\eta_1\varphi_1-i\psi_1})}{\eta_1+\eta_2+i(\xi_1-\xi_2)}\mathcal{O}_{L^2\cap L^{\infty}}(\ep),
\end{eqnarray*}
which yields
\begin{eqnarray}
\label{EstforM12}
M_{1,2} &=&M_{1,2}^S+M_{1,2}^S[\mathcal{O}_{ L^{\infty}}(\ep)+\mathcal{O}_{L^2\cap L^{\infty}}(\ep)]\\
&&-\frac{(e^{\eta_2\varphi_2+i\psi_2-\eta_1\varphi_1+i\psi_1}+
e^{-\eta_2\varphi_2-i\psi_2+\eta_1\varphi_1-i\psi_1})}{\eta_1+\eta_2+i(\xi_1-\xi_2)}\mathcal{O}_{L^2\cap L^{\infty}}(\ep),
\nonumber
\end{eqnarray}
where $\mathcal{O}_{ L^{\infty}}(\ep)$ means that the function is $\mathcal{O}(\ep)$ small in the $L^{\infty}(\mathbb{R})$ norm.
In the same way, we are able to show that
\be\label{EstforM22}
 M_{2,2}=M_{2,2}^S
+ M_{2,2}^S[\mathcal{O}_{ L^{\infty}}(\ep)+\mathcal{O}_{L^2\cap L^{\infty}}(\ep)] -\frac{(e^{-2i\psi_2}+e^{2i\psi_2})}{2\eta_2}
\mathcal{O}_{L^2\cap L^{\infty}}(\ep).
\ee
One can see that similar asymptotics hold for $M_{2,1}$ and $M_{1,1}$ which in turn yield the following expansion of $D$:
\be\label{defD}
D  =  D^S+ D^S \O_{L^{\infty}}(\ep)+(E_1+E_2) \O_{L^2\cap L^{\infty}}(\ep).
\ee
where
\begin{eqnarray}
E_1&=&\left[
\frac{1}{4\eta_1\eta_2}+\frac{1}{(\eta_1+\eta_2)^2+(\xi_1-\xi_2)^2}
\right]
\left[ e^{2\eta_2 \varphi_2-2\eta_1 \varphi_1}
+e^{-2\eta_2 \varphi_2+2\eta_1 \varphi_1}
+e^{2i\psi_2+2i\psi_1} \right. \nonumber\\
&& \left.
+e^{-2i\psi_2-2i\psi_1}
+e^{2\eta_2\varphi_2+2i\psi_1}
+e^{-2\eta_2\varphi_2-2i\psi_1}
+e^{2\eta_1\varphi_1-2i\psi_2}
+e^{-2\eta_1\varphi_1+2i\psi_2}
\right]
\end{eqnarray}
and
\begin{eqnarray}
E_2&=&
 \left[
\frac{1}{4\eta_1\eta_2}-\frac{1}{(\eta_1+\eta_2)^2+(\xi_1-\xi_2)^2}
\right]
\left[ e^{2\eta_2 \varphi_2+2\eta_1 \varphi_1}
+e^{-2\eta_2 \varphi_2-2\eta_1 \varphi_1}
+e^{2i\psi_2-2i\psi_1} \right. \nonumber\\
&& \left. +e^{-2i\psi_2+2i\psi_1}
+e^{2\eta_2\varphi_2-2i\psi_1}
+e^{-2\eta_2\varphi_2+2i\psi_1}
+e^{2\eta_1\varphi_1+2i\psi_2}
+e^{-2\eta_1\varphi_1-2i\psi_2}
\right].
\end{eqnarray}

We claim $D = D^S (1 + \O_{L^{\infty}}(\ep)),$ which has as a consequence
\be\label{1overD}
\frac{1}{D}=\frac{1}{D^S(1+\O_{L^{\infty}}(\ep))}=\frac{1+\O_{L^{\infty}}(\ep)}{D^S}.
\ee
Indeed, assuming $(\eta_1,\xi_1)\neq (\eta_2,\xi_2)$ and
letting $\gamma = 2\eta_1\eta_2 \left((\eta_1+\eta_2)^2+(\xi_1-\xi_2)^2 \right)$,
it is clear that
\begin{eqnarray}
\label{detSgrowth}
\gamma^{-1}\abs{D^S}
&=&
\left((\eta_1+\eta_2)^2+(\xi_1-\xi_2)^2 \right)
\Big(\cosh\big(2(\eta_1\varphi_1+\eta_2\varphi_2)\big)
+\cosh\big(2(\eta_1\varphi_1-\eta_2\varphi_2)\big)\Big)\nonumber\\
&&
-4\eta_1\eta_2\Big(\cosh\big(2(\eta_1\varphi_1+\eta_2\varphi_2)\big)
+\cos\big(2(\psi_1-\psi_2)\big)\Big)
\nonumber\\
&=&
\left((\eta_1-\eta_2)^2+(\xi_1-\xi_2)^2 \right)
\cosh\big(2(\eta_1\varphi_1+\eta_2\varphi_2)\big) \nonumber\\
&&
+\left((\eta_1+\eta_2)^2+(\xi_1-\xi_2)^2 \right)
\cosh\big(2(\eta_1\varphi_1-\eta_2\varphi_2)\big)
-4\eta_1\eta_2\cos\big(2(\psi_1-\psi_2)\big)
\nonumber\\
&\gtrsim &
\cosh\big(2(\eta_1\varphi_1+\eta_2\varphi_2)\big)
+\cosh\big(2(\eta_1\varphi_1-\eta_2\varphi_2)\big),
\end{eqnarray}
because  $(\eta_1+\eta_2)^2+(\xi_1-\xi_2)^2 >4\eta_1\eta_2$ and
$\cosh 2(\eta_1\varphi_1-\eta_2\varphi_2)\geq 1\geq\cos (\psi_1-\psi_2).$
Using this estimate, we obtain
\begin{equation*}
\abs{E_1}\lesssim\cosh\big(2(\eta_2\varphi_2-\eta_1\varphi_1)\big)
\lesssim \abs{D^S}.
\end{equation*}
Since
\begin{equation*}
\cosh(2\eta_1\varphi_1)+\cosh(2\eta_2\varphi_2)
\lesssim
\cosh\big(2(\eta_1\varphi_1+\eta_2\varphi_2)\big)
+\cosh\big(2(\eta_1\varphi_1-\eta_2\varphi_2)\big),
\end{equation*}
we also have
\begin{equation*}
\abs{E_2}\lesssim\cosh\big(2(\eta_1\varphi_1+\eta_2\varphi_2)\big)+\cosh(2\eta_1\varphi_1)+\cosh(2\eta_2\varphi_2)
\lesssim\abs{D^S},
\end{equation*}
from which the claim follows.

We turn to the asymptotics for $F.$
$F_{2,1}$ satisfies the following
\begin{eqnarray}\label{EstforU}
&& F_{2,1} = F_{2,1}^S+F_{2,1}^S\O_{L^{\infty}}(\ep)\\
&& \phantom{texttest} +[e^{\eta_2\varphi_2-i\psi_2+\eta_1\varphi_1+i\psi_1}
+e^{-\eta_2\varphi_2+i\psi_2-\eta_1\varphi_1-i\psi_1}-e^{\eta_2\varphi_2-i\psi_2-\eta_1\varphi_1-i\psi_1}]\O_{L^2\cap L^{\infty}}(\ep),\nonumber
\end{eqnarray}
while $F_{1,1}$ can be expanded as
\begin{eqnarray}\label{EstforT}
F_{1,1} = F_{1,1}^S+F_{1,1}^S\O_{L^{\infty}}(\ep)+[e^{2\eta_1\varphi_1}+e^{-2\eta_1\varphi_1}-e^{-2i\psi_1}]\O_{L^2\cap L^{\infty}}(\ep).
\end{eqnarray}

\vskip.4in
\underline{Step 2: Estimates on a compact set.}\\

Let $R>0$ be a large constant independent of $\e$ (and $t$) to be fixed later.
The purpose of this point is to obtain asymptotics of $F_{1,1}M_{2,2}$ and $F_{2,1}M_{1,2}$
on the compact set $\{\abs{x}<R\}$ (for all $t\in \mathbb{R}$). For our goal, it is enough
to show that these terms differ from  $F^S_{1,1}M^S_{2,2}$ and $F^S_{2,1}M^S_{1,2}$
respectively, by a quantity uniformly controlled by $\e.$

To this end, we note that on the compact set, the previous estimates \eqref{EstforM12}--\eqref{EstforT} yield
\be\label{1}
\abs{F_{1,1}M_{2,2}-F^S_{1,1}M^S_{2,2}}
\lesssim
\cosh\big(2(\eta_1\varphi_1+\eta_2\varphi_2)\big)
+\cosh\big(2(\eta_1\varphi_1-\eta_2\varphi_2)\big)
\ee
and
\be\label{2}
\abs{F_{2,1}M_{1,2}-F^S_{2,1}M^S_{1,2}}
\lesssim
\cosh\big(2(\eta_1\varphi_1+\eta_2\varphi_2)\big)
+\cosh\big(2(\eta_1\varphi_1-\eta_2\varphi_2)\big),
\ee
where $\O_{L^{\infty}}(\ep)$ is now used in the $L^{\infty}([-R,R])$ norm.

From \eqref{1overD} and \eqref{detSgrowth}, we observe that
\be
\abs{D}\gtrsim \abs{D^S}\gtrsim
\cosh\big(2(\eta_1\varphi_1+\eta_2\varphi_2)\big)
+\cosh\big(2(\eta_1\varphi_1-\eta_2\varphi_2)\big),
\ee
so that
\begin{eqnarray}\label{1<R}
\abs{\frac{F_{1,1}M_{2,2}}{D}-\frac{F^S_{1,1}M^S_{2,2}}{D^S}}
&=&
\abs{\frac{F_{1,1}M_{2,2}}{D^S}(1+\O_{L^{\infty}}(\ep))-\frac{F^S_{1,1}M^S_{2,2}}{D^S}}\nonumber\\
&\lesssim&
\abs{\frac{F_{1,1}M_{2,2}}{D^S}-\frac{F^S_{1,1}M^S_{2,2}}{D^S}}+\O_{L^{\infty}}(\ep)\abs{\frac{F_{1,1}M_{2,2}}{D^S}}\nonumber\\
&=&\O_{L^{\infty}}(\ep).
\end{eqnarray}
Similarly, one has
\be\label{2<R}
\abs{\frac{F_{2,1}M_{1,2}}{D}-\frac{F^S_{2,1}M^S_{1,2}}{D^S}}
=\O_{L^{\infty}}(\ep),
\ee
thanks to \eqref{1} and \eqref{2}.

\vskip.4in
\underline{Step 3: Estimates outside a compact set.}\\

We now deal with the estimates outside the compact set $\{\abs{x}<R\}$ for the same $R$ as in Step 2.
Here,  the norms $L^2,$ $L^{\infty}$ are taken outside the compact set $\{\abs{x}<R\}$.
Again, from
\eqref{EstforM12}--\eqref{EstforT}, we have
\begin{eqnarray}\label{N_1>R}
F_{11}M_{22} = F_{11}^SM_{22}^S & - & \frac{1}{2\eta_2}
\left( e^{-2\eta_2\varphi_2+2i\psi_1}+e^{2\eta_2\varphi_2+2i\psi_1} \right) \O_{L^{\infty}}(\ep) \\
& - & \frac{1}{2\eta_2} ( e^{-2\eta_1\varphi_1}+e^{2\eta_1\varphi_1})(e^{2i\psi_2}+e^{-2i\psi_2}) \O_{L^{\infty}}(\ep) \nonumber\\
& + & \frac{1}{2\eta_2} (e^{-2\eta_2\varphi_2}+e^{2\eta_2\varphi_2})(e^{-2\eta_1\varphi_1}+e^{2\eta_1\varphi_1}) \O_{L^2\cap L^{\infty}}(\ep),\nonumber
\end{eqnarray}
and
\begin{eqnarray}\label{N_2>R}
F_{2,1}M_{1,2} = F^S_{2,1}M^S_{1,2} + \frac{1}{\eta_1+\eta_2+i(\xi_1-\xi_2)} \left[ G_1 \O_{L^{\infty}}(\ep) +
G_2 \O_{L^2\cap L^{\infty}}(\ep) \right],
\end{eqnarray}
where
\begin{eqnarray*}
G_1 & = & -e^{-\eta_2\varphi_2+i\psi_2+\eta_1\varphi_1+i\psi_1}
(e^{-\eta_2\varphi_2-i\psi_2-\eta_1\varphi_1+i\psi_1}+e^{\eta_2\varphi_2+i\psi_2+\eta_1\varphi_1-i\psi_1}), \\
G_2 & = & e^{-\eta_2\varphi_2+i\psi_2+\eta_1\varphi_1+i\psi_1}
(e^{\eta_2\varphi_2+i\psi_2-\eta_1\varphi_1+i\psi_1}+e^{-\eta_2\varphi_2-i\psi_2+\eta_1\varphi_1-i\psi_1}) \\
& \phantom{t} & +(e^{\eta_2\varphi_2-i\psi_2+\eta_1\varphi_1+i\psi_1}
+e^{-\eta_2\varphi_2+i\psi_2-\eta_1\varphi_1-i\psi_1})
(e^{-\eta_2\varphi_2-i\psi_2-\eta_1\varphi_1+i\psi_1}+e^{\eta_2\varphi_2+i\psi_2+\eta_1\varphi_1-i\psi_1})\\
& \phantom{t} &
-e^{\eta_2\varphi_2-i\psi_2-\eta_1\varphi_1-i\psi_1}
(e^{-\eta_2\varphi_2-i\psi_2-\eta_1\varphi_1+i\psi_1}+e^{\eta_2\varphi_2+i\psi_2+\eta_1\varphi_1-i\psi_1})\\
& \phantom{t} &
+(e^{\eta_2\varphi_2-i\psi_2+\eta_1\varphi_1+i\psi_1}
+e^{-\eta_2\varphi_2+i\psi_2-\eta_1\varphi_1-i\psi_1})
(-e^{\eta_2\varphi_2+i\psi_2-\eta_1\varphi_1+i\psi_1}-e^{-\eta_2\varphi_2-i\psi_2+\eta_1\varphi_1-i\psi_1}) \nonumber\\
& \phantom{t} &
-e^{\eta_2\varphi_2-i\psi_2-\eta_1\varphi_1-i\psi_1}
(-e^{\eta_2\varphi_2+i\psi_2-\eta_1\varphi_1+i\psi_1}-e^{-\eta_2\varphi_2-i\psi_2+\eta_1\varphi_1-i\psi_1}).
\end{eqnarray*}
From these estimates, we obtain
\begin{eqnarray}\label{AAA}
\abs{F_{1,1}M_{2,2}-F^S_{11}M^S_{2,2}}&\lesssim&\mathcal{O}_{L^2\cap L^{\infty}}(\e)
+(H_1 +H_2)\mathcal{O}_{L^{\infty}}(\e)+ H_1 H_2\;\mathcal{O}_{L^2\cap L^{\infty}}(\e),\nonumber
\end{eqnarray}
where
\begin{eqnarray*}
H_1 & = & -(e^{-2\eta_1\varphi_1}+e^{2\eta_1\varphi_1})(e^{2i\psi_2}+e^{-2i\psi_2}),\\
H_2 & = & -(e^{-2\eta_2\varphi_2+2i\psi_1}+e^{2\eta_2\varphi_2+2i\psi_1}).
\end{eqnarray*}
We can see these functions satisfy the bounds
\begin{equation}\label{Hibounds}
\abs{H_i}\lesssim \cosh(2\eta_i \varphi_i),\quad\mbox{ for }i=1,2.
\end{equation}
In the same way, we have
\begin{eqnarray}
\abs{F_{2,1}M_{1,2}-F^S_{2,1}M^S_{1,2}} & \lesssim & J_1 J_3 \mathcal{O}_{L^{\infty}}(\e) \\
& \phantom{t} & + (J_1 J_4 +J_3^2+J_2 J_3+J_3J_4+J_2J_4) \mathcal{O}_{L^2 \cap L^{\infty}}(\e),\nonumber
\end{eqnarray}
where
\begin{eqnarray*}\label{Jsdef}
J_1 & = & -e^{-\eta_2\varphi_2+\eta_1\varphi_1},  \\
J_2 & = & -e^{\eta_2\varphi_2-\eta_1\varphi_1}, \\
J_3 & = & e^{-\eta_2\varphi_2-\eta_1\varphi_1} +e^{\eta_2\varphi_2+\eta_1\varphi_1}, \\
J_4 & = & -e^{\eta_2\varphi_2-\eta_1\varphi_1}-e^{-\eta_2\varphi_2+\eta_1\varphi_1}.
\end{eqnarray*}

It is immediate from \eqref{Hibounds} that
\be\label{a}
\abs{H_1 H_2}\lesssim
\cosh (2(\eta_1\varphi_1+\eta_2\varphi_2))
+\cosh (2(\eta_1\varphi_1-\eta_2\varphi_2)).
\ee

In a similar fashion, we deduce from \eqref{Jsdef} that
\begin{eqnarray*}
&&\abs{J_1 J_4}\lesssim e^{-2\eta_2\varphi_2}+e^{2\eta_1\varphi_1},\nonumber\\
&&\abs{J_2 J_4}\lesssim
1+e^{2\eta_2\varphi_2-2\eta_1\varphi_1},\nonumber\\
&&\abs{J_3^2}\lesssim\cosh(2\eta_2\varphi_2+2\eta_1\varphi_1)+1,\nonumber\\
&&\abs{J_3 J_4}\lesssim
\cosh(2\eta_1\varphi_1)+\cosh(2\eta_2\varphi_2).
\end{eqnarray*}
All these quantities can be bounded from above by $\abs{D},$ in light of \eqref{1overD} and \eqref{detSgrowth}.
Thus, we can assert that
$\frac{H_1 H_2}{D}, \frac{J_1 J_4}{D},$  $\frac{J_3^2}{D}, \frac{J_2 J_4}{D}$ and $\frac{J_3 J_4}{D}$ are $L^{\infty}$ functions whose norms are bounded uniformly in $t.$

Now we show that $\frac{H_1}{D}, \frac{H_2}{D}, \frac{J_1 J_3}{D}$ and $\frac{J_2 J_3}{D}$ are bounded in $L^2,$ uniformly in $t.$
From \eqref{a} and similar estimates, we see that it suffices to prove that $\frac{e^{2\eta_1\varphi_1}}{D}\in L^2$  (the terms $\frac{e^{-2\eta_1\varphi_1}}{D},$  $e^{\pm 2\eta_2\varphi_2}$ can be dealt with similarly).

Appealing once again to \eqref{1overD} and \eqref{detSgrowth} we see that
\begin{eqnarray}
\frac{e^{-2\eta_1\varphi_1}}{\abs{D}}&\lesssim&
\frac{e^{-2\eta_1\varphi_1}}{\cosh (2(\eta_1\varphi_1+\eta_2\varphi_2))
+\cosh (2(\eta_1\varphi_1-\eta_2\varphi_2))}\nonumber\\
&\lesssim&
\frac{e^{-2\eta_1\varphi_1}}{e^{2\eta_2\varphi_2+2\eta_1\varphi_1}+e^{-2\eta_2\varphi_2+2\eta_1\varphi_1}}
\lesssim\mbox{sech}(2\eta_2\varphi_2)\in L^2.
\end{eqnarray}

Collecting all these estimates, we see that outside the compact set $\{\abs{x}<R\}$
\be\label{1>R}
\frac{F_{1,1}M_{2,2}}{D}=\frac{F_{1,1}^S M_{2,2}^S}{D^S}+\mathcal{O}_{L^2}(\e)
\ee
and
\be\label{2>R}
\frac{F_{1,2}M_{1,2}}{D}=\frac{F_{1,2}^S M_{1,2}^S}{D^S}+\mathcal{O}_{L^2}(\e).
\ee
Combining \eqref{1<R}, \eqref{2<R}, \eqref{1>R} and \eqref{2>R}, we obtain \eqref{normQ-Q_S}.
\end{proof}

\begin{corollary}\label{PPP-cor}
Under conditions of Proposition \ref{PPP}, there is a positive constant $C$ such that
\be\label{normQ}
\| q_0 \|_{L^2} \leq C \norm{q-q^S}_{L^{2}}, \quad t \in \mathbb{R}.
\ee
\end{corollary}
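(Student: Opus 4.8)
The plan is to exploit the invertibility of the dressing transformation established in Proposition \ref{proposition-dressing} and to re-run the estimates of Proposition \ref{PPP} for the \emph{inverse} dressing transformation, which reconstructs $q_0$ from $q$ by removing the $n$ solitons. Concretely, the transformation formula \eqref{transformation} may be read backwards as
$Q(q_0)=Q(q)-\sum_{k=1}^{n}\left(r_k\otimes\bar s_k\sigma_3-\sigma_3 s_k\otimes\bar r_k\right)$,
with associated dressing matrix $\chi^+$ from \eqref{inverse-chi} having poles at $\bar z_k$. In this reading the two reference solutions swap roles: the $2$-soliton $q^S$ is the potential carrying the bound states and is mapped to the trivial solution $0$, while the perturbed potential $q$ is mapped to the small perturbation $q_0$. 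The inverse transformation is driven by the Zakharov--Shabat eigenfunctions $\{r_k\}$ of the potential $q$ at the eigenvalues $z_k$, governed by \eqref{solution-r}, and it produces the soliton part $q-q_0$, which equals $q^S$ exactly when $q=q^S$.

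The first step is to establish the analogue of estimate \eqref{fghl} for the inverse transformation: writing $r_k=r_k^S+(r_k-r_k^S)$, where $r_k^S$ is the exponentially localized soliton bound state, I would show that the perturbation $r_k-r_k^S$ is controlled in $L^2\cap L^\infty$ by $\norm{q-q^S}_{L^2}$. This is the inverse-scattering analogue, now centered at the soliton potential $q^S$ rather than at $0$, of Lemmas~4.1 and 4.3 of \cite{MP}: one sets up the Jost/Volterra integral equation for \eqref{solution-r} with $q=q^S+(q-q^S)$, treats $q-q^S$ as the small driving term, and uses the exponential decay of the soliton background to close the contraction.

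With this in hand I would repeat Steps~1--3 of the proof of Proposition \ref{PPP} verbatim, now for the Gramian, $\Sigma$ and determinant built from $\{r_k\}$. The algebraic structure is identical, a rational matrix in $z$ with $n$ simple poles, and the soliton growth estimate \eqref{detSgrowth} is unchanged, since the denominators remain governed by the same $\cosh$ profile of $D^S$ (the inverse Gramian for the pure soliton differs from $M^S$ only by the Hermitian symmetry of the construction). The outcome is $\norm{q^S-(q-q_0)}_{L^2}\lesssim\norm{q-q^S}_{L^2}$, and the corollary then follows from the triangle inequality applied to the identity $q_0=(q-q^S)+\bigl(q^S-(q-q_0)\bigr)$.

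The main obstacle is the first step, and it is where the reverse inequality has genuine content rather than being a mere relabelling of Proposition \ref{PPP}. Since every object in the construction is ultimately parametrized by $q_0$, all perturbation pieces are naively $\mathcal{O}(\norm{q_0}_{L^2})$; the essential point is to re-express the size of $r_k-r_k^S$ intrinsically through $\norm{q-q^S}_{L^2}$, via the perturbative spectral analysis of \eqref{solution-r} at the soliton potential. Conceptually, what both $q_0$ and $q-q^S$ measure is the shared continuous-spectrum data, and the exponential localization of the bound states $r_k^S$ is precisely what prevents the soliton part from absorbing the full size of $q_0$, so that the comparison with $\norm{q-q^S}_{L^2}$ closes. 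An equivalent way to see that the bound must hold is that the forward map $q_0\mapsto q$ is smooth with bijective dressing inverse, so its linearization at $q_0=0$ is $I$ plus a soliton-localized (compact) operator and is therefore boundedly invertible on $L^2$; the inverse-dressing estimates above are the quantitative, nonlinear realization of this injectivity.
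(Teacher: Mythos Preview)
Your approach and the paper's coincide in spirit: both rest on the invertibility of the dressing transformation. The paper's own proof is two sentences---it simply asserts that since the dressing formulas \eqref{dressing-trans} and \eqref{transformation-q} are invertible by construction, the bound follows from the triangle inequality and bound \eqref{bound-tech}. Read na\"ively via $q_0=(q-q^S)-\bigl(\tfrac{2\Sigma}{D}-q^S\bigr)$, this gives only $\|q_0\|_{L^2}\le\|q-q^S\|_{L^2}+C\|q_0\|_{L^2}$, which rearranges to the desired inequality only if the implicit constant $C$ in \eqref{bound-tech} is below $1$, something nowhere verified in the proof of Proposition~\ref{PPP}. So the phrase ``invertible by the construction'' is carrying real weight, and your plan---rerun the estimates of Proposition~\ref{PPP} for the inverse dressing $\chi^+$, now driven by $\{r_k\}$ and centred at $q^S$ rather than at $0$---is precisely how one makes that phrase honest. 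In that sense you are not doing something different from the paper; you are supplying what the paper suppresses.

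Two technical points deserve care when you execute the plan. First, your analogue of \eqref{fghl} for $r_k-r_k^S$ in terms of $\|q-q^S\|_{L^2}$ is genuinely harder than the original, because the background ZS operator now carries the soliton potential $q^S$ and has bound states exactly at the $z_k$; the variation-of-parameters kernel therefore mixes decaying and growing soliton Jost solutions, and closing the contraction requires the correct choice of integration limits (this is where the exponential localization you invoke must be used quantitatively, not just qualitatively). Second, the vectors $r_k$ in the construction are pinned down by the linear system \eqref{lin-system-1} applied to the specific $s_k$ of \eqref{defs1}, not by an intrinsic boundary condition on \eqref{solution-r}; before running the perturbation argument you should verify that this normalization reduces to $r_k^S$ when $q_0=0$, so that the comparison $r_k-r_k^S$ is anchored at the right reference vector. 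Both issues are manageable, but they are the substance hidden behind the paper's single word ``invertible''.
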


\begin{proof}
Because the dressing transformation formulas (\ref{dressing-trans}) and (\ref{transformation-q})
are invertible by the construction, the bound (\ref{normQ}) follows  from the triangle inequality
and bound (\ref{bound-tech}).
\end{proof}

\begin{rem}
Proposition \ref{PPP} can be extended to the general case of multi-soliton configurations.
The global estimates obtained in \eqref{EstforM12}--\eqref{EstforT} can be used
to derive explicit (though cumbersome) expansions for the $D_{j,k}$'s
(including the determinant $D$). On the compact set $\{\abs{x}<R\}$,
the $\mathcal{O}_{L^{\infty}}(\varepsilon)$ difference between $D_{j,k} F_{j,k}$'s
and   $D_{j,k}^{S} F_{j,k}^{S}$'s together with
\[
\frac{1}{D}=\frac{1}{D^S}(1+\mathcal{O}_{L^{\infty}(\varepsilon)})
\]
suffices to show
$$
\frac{\Sigma}{D}=\frac{\Sigma^S}{D^S}+\mathcal{O}_{L^{\infty}}(\varepsilon)
$$
for all $t\in \mathbb{R}$. The estimates outside the compact set $\{\abs{x}<R\}$
can be achieved as in the third step, thanks to the bound
\be
e^{2\left(\sum_{i=1}^n \sigma_i\eta_i\varphi_i\right)}\lesssim\abs{D},
\ee
valid for any choice of signs $\sigma:\{1,\ldots,n\}\to\{-1,1\},$ and making use of the fact that for any $j\in\{1,\ldots,n\},$
\be
\frac{e^{2\sum_{i\neq j}\sigma_i\eta_i\varphi_i}}{\abs{D}}\lesssim
\mbox{sech }2\eta_j\varphi_j,
\ee
which is in $L^2$ with norm bounded independent of $t.$
The estimates thus obtained can be seen to be independent of the time,
thanks to the conservation of the $L^2(\mathbb{R})$ norm of the solution $q_0$
of the cubic NLS equation (\ref{NLS}).
\end{rem}

\section{Orbital stability of multi-solitons}

In this section we prove the result on the orbital stability of multi-solitons in $L^2(\mathbb{R})$
given by Theorem \ref{theorem-main}. First, we will assume that the initial data $q_0$
for the cubic NLS equation (\ref{NLS}) satisfies the bound (\ref{closeness}) (that is, it is close
the multi-soliton $q^S$) and belongs to $H^3(\mathbb{R})$. By the well-posedness theory
for the NLS equation \cite{GV,Kato}, there exists a unique solution in class
$$
q \in C^0(\mathbb{R},H^3(\mathbb{R})) \cap C^1(\mathbb{R},H^1(\mathbb{R})),
$$
such that $q|_{t = 0} = q_0$. By Sobolev embeddings, $q_t$ and $q_{xx}$ are continuous
functions of $(x,t) \in \mathbb{R} \times \mathbb{R}$ such that $q$ is a classical solution
of the cubic NLS equation (\ref{NLS}) in $L^2(\mathbb{R})$.

By Proposition \ref{PPP}, a small $L^2$-neighborhood of the zero solution of the cubic NLS equation
(\ref{NLS}) is mapped into a small $L^2$-neighborhood of a multi-soliton $q^S$ (details were given
for the case of $2$-solitons). The dressing transformation formulas (\ref{dressing-trans}) and (\ref{transformation-q})
are invertible by the construction. However,
since the dressing transformation is not onto, an arbitrary
point in a small $L^2$-neighborhood of the multi-soliton $q^S$ is not mapped back
to the small $L^2$-neighborhood of the zero solution, unless constraints are
set to specify uniquely the parameters of the multi-soliton $q^S$ \cite{MP}.

To avoid the lengthy analysis of \cite{MP} with decomposition of multi-solitons using
the symplectic orthogonality constraints, we apply the inverse scattering transform methods.
Results of the inverse scattering transform methods for the cubic NLS equation
are collected together in \cite{CP}.
Given the initial data $q_0$ near a multi-soliton $q^S$ in the weighted $L^2(\mathbb{R})$
space according to the bound (\ref{closeness}), the direct and inverse scattering problems
can be solved as in \cite{CP} to obtain parameters of the multi-soliton $q^{S'}$
from the initial data $q_0$. Thanks to the bound (\ref{closeness}), the initial data
$q_0$ supports exactly $n$ eigenvalues in the Lax system (\ref{eq:zs}) if $q^S$
supports $n$ eigenvalues.

Here we note two important facts. First, multi-solitons of the cubic NLS equation
belong to the class of generic potentials in $L^1(\mathbb{R})$, which means that
a small perturbation to a multi-soliton in $L^1(\mathbb{R})$ does not change
the number of solitons (eigenvalues of the Lax operators). This applies to
a small perturbation in $L^{2,s}(\mathbb{R})$ with $s > \frac{1}{2}$,
which is continuously embedded into $L^1(\mathbb{R})$.
Second, the scattering data associated with $q_0$ are Lipschitz continuous in $H^s(\mathbb{R})$
if $q_0 \in L^{2,s}(\mathbb{R})$ with $s > \frac{1}{2}$ (Lemma 2.4 in \cite{CP}).
Bound (\ref{bounds-1}) follow from Lipschitz continuity of the scattering data
associated with the initial data $q_0$ satisfying the bound (\ref{closeness}).

By using the inverse dressing transformation (\ref{dressing-trans}) and
\eqref{transformation-q} with parameters of
the multi-soliton $q^{S'}$ chosen from the inverse scattering transform associated
with the potential $q_0$, we map the initial data $q_0$ to the new initial data
$\tilde{q}_0$, which is free of solitons. By Corollary \ref{PPP-cor},
it satisfies the bound
$$
\| \tilde{q}_0 \|_{L^2} \leq C \epsilon
$$
for some $C > 0$, where $\epsilon$ is defined by the initial bound (\ref{closeness}).

Let $\tilde{q}$ be a classical solution of the cubic NLS equation (\ref{NLS}) in $L^2(\mathbb{R})$
such that $\tilde{q} |_{t = 0} = \tilde{q}_0$.
By the $L^2$ conservation, we have $\| \tilde{q} \|_{L^2} \leq C \epsilon$
for all $t \in \mathbb{R}$. Then, by the dressing transformation (\ref{dressing-trans}) and
\eqref{transformation-q} with the same parameters and Proposition \ref{PPP}, we
obtain the bound (\ref{bounds-2}) for all $t \in \mathbb{R}$.

To complete the proof of Theorem \ref{theorem-main}, we need to show that the bound
(\ref{bounds-2}) remains true if $q_0$ satisfies the bound (\ref{closeness}) but does not
belong to $H^3(\mathbb{R})$ (Bound (\ref{bounds-1}) remains true, thanks
to the inverse scattering transform results \cite{CP}.) In this case,
$q_0$ generates a global solution of the cubic NLS equation (\ref{NLS})
in class $q \in C(\mathbb{R},L^2(\mathbb{R}))$, which is not a classical solution of the NLS equation.
Therefore, the dressing transformation (\ref{dressing-trans}) and
\eqref{transformation-q} cannot be used directly
for the solution $q$. Instead, we consider an approximating sequence in Sobolev
spaces, similarly to \cite{CP,MP}.

Let $\{ q^{(k)}_0 \}_{k \in \mathbb{N}}$ be a sequence in $H^3(\mathbb{R})$
such that $q_0^{(k)} \to q_0$ in $L^2(\mathbb{R})$
as $k \to \infty$. Then $\{ q^{(k)} \}_{k \in \mathbb{N}}$ is a sequence of
classical solutions of the cubic NLS equation such that $q^{(k)} |_{t = 0} = q^{(k)}_0$.
By the previous arguments, there exists a sequence of $n$-soliton solution $q^{S^{(k)}}$ with parameters
$\{ {\xi}^{(k)}_j, {\eta}^{(k)}_j, {x}^{(k)}_j, {\theta}^{(k)}_j \}_{j = 1}^n$
such that
$$
\max_{1 \leq j \leq n} \left| ({\xi}^{(k)}_j,{\eta}^{(k)}_j,{x}^{(k)}_j,{\theta}^{(k)}_j) -
(\xi_j, \eta_j, x_j, \theta_j) \right| \leq C \epsilon
$$
and
$$
\| q^{(k)}(\cdot,t) - q^{S^{(k)}}(\cdot,t) \|_{L^2(\mathbb{R})} \leq C \epsilon, \quad t \in \mathbb{R}.
$$
Thanks to the $L^2(\mathbb{R})$ conservation, the sequence
$\{ q^{(k)} \}_{k \in \mathbb{N}}$ converges to $q$ in the $L^2(\mathbb{R})$ norm
as $k \to \infty$ for any $t \in \mathbb{R}$. As a result,
there is a subsequence that converges the $n$-soliton $q^{S'}$ with
parameters $\{ {\xi}'_j, {\eta}'_j, {x}'_j, {\theta}'_j \}_{j = 1}^n$
such that bounds (\ref{bounds-1}) and (\ref{bounds-2}) are satisfied. The proof of Theorem \ref{theorem-main}
is complete.

\end{document}